\let\csname equation*\endcsname\relax
\let\csname endequation*\endcsname\relax
\definecolor{emphboxcolor}{rgb}{.8, .8, 1}
\def\mystrut(#1,#2){\vrule height #1pt depth #2pt width 0pt} 
\newcommand*\emphbox[1]{\colorbox{emphboxcolor}{\mystrut(20,15)\hspace{3em}#1\hspace{3em}}}
\DeclareMathAlphabet{\mathpzc}{T1}{pzc}{m}{it}
\newtheorem{definition}{Definition}
\newtheorem{theorem}{Theorem}
\newtheorem{corollary}{Corollary}
\newtheorem{remark}{Remark}
\newtheorem{lemma}{Lemma}
\definecolor{orange}{rgb}{.7,0.4,0.0}
\definecolor{darkr}{rgb}{0.5,0.1,0.1}
\newcommand{\set}[1] {\mathrm{#1}}
\newcommand{\sset}[1] {\mathds{#1}}
\newcommand{\map}[1] {\mathrm{#1}}
\newcommand{\op}[1] {\mathbf{#1}}
\newcommand{\sop}[1] {\mathds{#1}}
\newcommand{\avg}[1] {\left< #1\right>}
\newcommand{\point}[0] {\quad.}
\newcommand{\comma}[0] {\quad,}
\newcommand{\mt}[1] {\quad\mathrm{#1}\quad}
\newcommand{\knownth}[1] {\textit{#1}}
\newcommand{\dd}[0] {\mathrm{d}}
\newcommand{\p}[0] {\sop{P}}
\newcommand{\C}[0] {\mathcal C}
\newcommand{\ii}[0] {\mathrm{i}}
\newcommand{\ee}[0] {\mathrm{e}}
\newcommand{\bigsign}[2][1.6] {
\FPset\whelp{#1}  
\FPmul\whelp\whelp{.55}
\FPsub\whelp\whelp{.3}
\raisebox{-\FPprint\whelp em}{\resizebox{#1 em}{!}{$#2$}}
}
\newcommand{\bigast}[2] {\overset{#2}{\underset{#1}{\bigsign{*}}}}
\newcommand{\momentumarrows}{\fmfcmd{%
vardef middir(expr p, ang) =
  dir(angle direction length(p)/2 of p + ang)
enddef;
style_def mom_left(expr p) =
  shrink(.7);
  l = arclength(p);
  drawarrow(subpath(.5-6/l, .5+6/l) of p shifted(4thick*middir(p, 90)));
  endshrink();
enddef;
style_def mom_right(expr p) =
  shrink(.7);
  l = arclength(p);
  drawarrow(subpath(.5-6/l, .5+6/l) of p shifted(4thick*middir(p, -90)));
  endshrink();
enddef;
}}
\newenvironment{fgbare}[1]{
\FPset\whelp{#1}  
\FPadd\whelp\whelp{2.2}
\begin{minipage}{\FPprint\whelp\unitlength}
\begin{center}
}{
\end{center}
\end{minipage}
}
\global\let\cx\0
\global\let\cy\0
\begin{document}
\title{On the Efficient Calculation of a Linear Combination of Chi-Square Random Variables with an Application in Counting String Vacua}

\author{Johannes Bausch\thanks{Department of Physics, Cornell University, Ithaca, NY 14853, USA} \footnote{Present address:
DAMTP, Centre for Mathematical Sciences, University of Cambridge, Wilberforce Road, Cambridge CB3 0WB, UK}}

\date{\today}

\maketitle

\begin{abstract}
Linear combinations of chi square random variables occur in a wide range of fields. Unfortunately, a closed, analytic expression for the pdf is not yet known.
Starting out from an analytic expression for the density of the sum of two gamma variables, a computationally efficient algorithm to numerically calculate the linear combination of chi square random variables is developed. An explicit expression for the error bound is obtained.
The proposed technique is shown to be computationally efficient, i.e. only polynomial in growth in the number of terms compared to the exponential growth of most other methods. It provides a vast improvement in accuracy and shows only logarithmic growth in the required precision. In addition, it is applicable to a much greater number of terms and currently the only way of computing the distribution for hundreds of terms.
As an application, the exponential dependence of the eigenvalue fluctuation probability of a random matrix model for 4d supergravity with N scalar fields is found to be of the asymptotic form exp(-0.35N).
\end{abstract}

\section{Review and Main Results}
\subsection{Introduction}
Linear combinations of $\chi^2$ random variables occur in many different fields, in statistical hypothesis testing as well as in high energy physics. In string theory, they have had a relative recent comeback, occurring frequently in the study of random matrix models, such as the spectral analysis of the \knownth{Wishart} ensemble. Those models have proven an invaluable tool for making statistical claims about the existence of stable vacua in various theories of supergravity. We will revisit one of these applications in \prettyref{sec:application}.

For a full understanding of a random variable, its distribution is of course of utmost importance. Unfortunately, for the probability density function (pdf) of a linear combination of $\chi^2$ random variables, there is no known closed analytic expression yet.
The great number of related publications---see \prettyref{sec:existing}---proves the ongoing interest in this topic.

It is surprising, however, that \emph{none} of the known methods for calculating this pdf seems computationally viable for more than a handful of terms, at most. In fact, the ones that are accurate enough suffer from exponential growth in the number of terms to calculate, see for example \cite{MATHAI_DAM} and the comment on this method in \cite{AKKOUCHI_MULTIINT_GAMMA}. Other known methods are discussed in \prettyref{sec:existing}.

Unfortunately, this lack of methodology proved to be an obstacle to making quantitative analytic claims about the existence of aforementioned vacua in a model of $4d$ random supergravity, see \cite{MARSH_RANDGRAV}. Motivated by this fact, we present a relatively simple procedure that is nonetheless far superior in speed and accuracy to the hitherto published methods.

\subsection{The Existing Literature}\label{sec:existing}
As already mentioned, there exist many approaches to calculating or approximating the distribution of a linear combination of $\chi^2$ random variables. For our purposes, one exclusion criterion is that the method has to be quantitatively accurate, which means that we need an explicit expression for the error. 
This excludes fits to other distributions, as for examples the three moment fit suggested by \cite{SOLSTEPH_CHI}, or the more recent mixture approximation by \cite{LINDSAY_CHI}.

Unfortunately, it also excludes saddlepoint or method of deepest descent approximations, see for example \cite[Sec. 5]{WoodSaddlepoint}, which involves several steps that make it hard to track the error accurately.
Let us discuss the techniques which pass this requirement.

One brute-force method is of course to draw a couple of million random variables, multiply them with the weights and calculate the distribution empirically. This works well when one is interested in the quantiles that lie around the bulk of the mass. In our example, however, the very low quantiles are of interest, and we simply do not have enough statistics to calculate those. Note that importance sampling can---unfortunately---not be used, since we do not know the complete distribution.

Another often-used ansatz is to expand the moment generating function in a series representation and inverting term by term. One such example (for gamma random variables) is given in \cite{MOSCH_SERIES_GAMMA}. Unfortunately, the prefactors in the expansion cannot be calculated in polynomial time.
Another example (for chi squares) is \cite{MOSCH_SERIES_CHI} where, again, the prefactors are hard to compute.
The method targeted at exponential distributions suggested in \cite{AKKOUCHI_MULTIINT_EXP} suffers from the same exponential growth, which is easy to see by looking at the suggested functional form.
A more recent example is \cite{MARTINEZ_SERIES}, which uses Laguerre polynomials. The same problem persists here; in addition, the error bound given suggests that obtaining an accurate result will require a very large number of terms.

As a third group, there are techniques that use numerical integration to obtain the density function, see for example \cite{FLEISS_NUMINT} and \cite{AKKOUCHI_MULTIINT_GAMMA}. Unfortunately, in both papers, the number of dimensions over which we have to integrate for $n$ terms is $n-1$. While this might be feasible for small $n$, it is not for our application---just sampling two points in any direction grows as $2^n$, and even using sophisticated \knownth{Monte Carlo} or \knownth{Gibbs sampling} algorithms does not help when $n$ is of order $1000$. Another method, which has only one integral to calculate, \cite{ROBERTS_INT}, has the problem of a highly oscillatory integrand that has to be integrated over $\sset R$, which seems impossible to do to the required accuracy.

To summarize, none of the above-mentioned techniques is feasible in our regime of interest---either due to computational restrictions or because the answer is not precise enough.
It should be noted, however, that calculating exponentially small derivations is not something one normally encounters in statistics, so it comes as no surprise that methods for that regime are yet to be developed.
In this paper, we develop a method which is both fast, arbitrarily accurate and computationally efficient even for a vast number of terms, especially in the low quantile regime that we are interested in.


\section{Analytic Results}\label{sec:method}
\subsection{Main Result}
We will first state the main result and prove the details in due course.

As stated in the first chapter, the problem is to calculate the density function of $Z=\sum_{i=1}^n a_iX_i$, where $X_i\sim\chi^2_r$ are iid random variables for some $r>0$ and $a_i\in\sset R_{>0}$.
Let $\op T_n:\C^n\rightarrow\sset R[n]$ denote the \knownth{Taylor-Maclaurin} map which assigns to a function its \knownth{Taylor} polynomial to order $n$ around $0$, and $\op R_n:\C^n\rightarrow\C^n$ its associated remainder map.

\begin{theorem}\label{th:lincomb}
Let $r\in2\sset N_0+1$, $Z=\sum_{i=1}^n\left(a_iX_i+b_iY_i\right)$, $X_i,Y_j\sim\chi^2_r$ iid random variables, $a_i,b_i\in\sset R_{>0}$ for some $n>0$ and $a_1<b_1<...<a_n<b_n$. Let $m_1,...,m_n\in\sset N_0$ and $\bar m:=\max\{m_1,...,m_n\}$. Then the density function $f_Z$ is given by
\[
f_Z(x)=C\theta(x)\left(\map T(x)+\map R(x)\right)\comma
\]
where $\bigsign[1.2]*_{i=1}^{n} f_i:=f_1\bar*...\bar*f_n$,
\[\map T(x)=\left(\bigast{i=1}{n}\ee^{-\frac{a_i+b_i}{4a_ib_i}\cdot}\left(\op T_{m_i}\map I_{\frac r2-\frac12}\right)\left(\frac{b_i-a_i}{4a_ib_i}\cdot\right)\right)(x)
\]
and
\[C=\left(\frac{\Gamma\left(\frac12-\frac r2\right)}{\Gamma(r)}\right)^{\!\!\!n}\prod_{i=1}^n(4a_ib_i)^{-r}\left(\frac{b_i-a_i}{8a_ib_i}\right)^{\!\frac12-\frac r2}\]
and 
\begin{align*}
0<\map R(x)<&\ \bar{\map R}(x)-\map T(X)\\
&:=\left(\bigast{i=1}{n}\ee^{-\frac{a_i+b_i}{4a_ib_i}\cdot}\left(\op T_{m_i}\map I_{\frac r2-\frac12}+\map R_i\right)\left(\frac{b_i-a_i}{4a_ib_i}\cdot\right)\right)(x)-\map T(x)\point
\end{align*}
Finally, $\forall x\le y\in\sset R_{>0}$, $\map R_i$ is bounded by
\begin{align*}
\map R_i(x)&:=\op R_{m_i}\map I_{\frac r2-\frac12}\left(\frac{b_i-a_i}{4a_ib_i}x\right)\\
&\le\frac{x^{m_i+1}}{(m_i+1)!}\ \left(\frac{b_i-a_i}{4a_ib_i}\right)^{m_i+1}\map I_{\frac r2-\frac12}^{(m_i+1)}\left(\frac{b_i-a_i}{4a_ib_i}y\right)\point
\end{align*}
\end{theorem}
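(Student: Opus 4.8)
The plan is to realise $f_Z$ as a convolution of the $n$ independent block densities and then feed the Taylor splitting $\map I_{\frac r2-\frac12}=\op T_{m_i}\map I_{\frac r2-\frac12}+\op R_{m_i}\map I_{\frac r2-\frac12}$ (abbreviate $\nu:=\frac r2-\frac12$) into each factor. First I would record the closed form for a single block $a_iX_i+b_iY_i$: since $X_i,Y_i\sim\chi^2_r$ rescale to independent gamma variables of common shape $\frac r2$ and scales $2a_i,2b_i$, the sum-of-two-gamma density (the starting point announced in the abstract) gives $g_i(x)=C_i\,\theta(x)\,\ee^{-\frac{a_i+b_i}{4a_ib_i}x}\map I_\nu(\frac{b_i-a_i}{4a_ib_i}x)$, the constant $C_i$ absorbing the gamma normalisations together with the factor produced by the change of Bessel argument, and the hypothesis $a_i<b_i$ guaranteeing a positive argument $\frac{b_i-a_i}{4a_ib_i}x$. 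By independence of the blocks, $f_Z=g_1*\dots*g_n$; pulling every $C_i$ to the front yields the overall constant $C$ and leaves precisely the reduced convolution encoded by the $\bar*$ product, namely $\bigast{i=1}{n}\ee^{-\frac{a_i+b_i}{4a_ib_i}\cdot}\map I_\nu(\frac{b_i-a_i}{4a_ib_i}\cdot)$.

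Next I would expand each factor as $\op T_{m_i}\map I_\nu+\op R_{m_i}\map I_\nu$ and use multilinearity of the convolution to split the product into $2^n$ summands. The single all-Taylor summand is $\map T(x)$, and collecting every summand carrying at least one remainder factor defines $\map R(x)$, so that $f_Z=C\theta(x)(\map T(x)+\map R(x))$ holds exactly. The decisive structural input is that $r\in2\sset N_0+1$ forces $\nu\in\sset N_0$, so $\map I_\nu$ is an entire power series with nonnegative coefficients; hence $\op T_{m_i}\map I_\nu\ge0$ and $\op R_{m_i}\map I_\nu\ge0$ on $[0,\infty)$, each $\map I_\nu^{(k)}$ is nonnegative and nondecreasing there, and the exponentials are positive. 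Since a convolution of nonnegative functions is nonnegative, every remainder summand is $\ge0$, and as the factors are strictly positive on sets of positive measure at least one summand is strictly positive, giving $\map R(x)>0$.

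For the upper bound I would exploit monotonicity of convolution under pointwise domination of nonnegative factors: if $0\le u_i\le v_i$ on $[0,\infty)$ then $u_1*\dots*u_n\le v_1*\dots*v_n$, which telescopes from $(v-u)*w\ge0$. Writing each block remainder in Lagrange form, $\op R_{m_i}\map I_\nu(z)=\frac{z^{m_i+1}}{(m_i+1)!}\map I_\nu^{(m_i+1)}(\xi)$ with $\xi\in(0,z)$, and bounding $\map I_\nu^{(m_i+1)}(\xi)\le\map I_\nu^{(m_i+1)}(z)\le\map I_\nu^{(m_i+1)}(\frac{b_i-a_i}{4a_ib_i}y)$ via the monotonicity above (valid whenever $\xi\le z=\frac{b_i-a_i}{4a_ib_i}x\le\frac{b_i-a_i}{4a_ib_i}y$), yields the stated monomial bound $\map R_i$ on the $i$-th remainder. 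Substituting the dominating $\map R_i$ for $\op R_{m_i}\map I_\nu$ in each of the $2^n-1$ remainder summands only increases them, and their sum is exactly $\bar{\map R}(x)-\map T(x)$; hence $\map R(x)<\bar{\map R}(x)-\map T(x)$.

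The main obstacle I anticipate is the bookkeeping that converts the multilinear expansion into the clean two-sided estimate: one must verify simultaneously that every mixed Taylor/remainder convolution term is nonnegative and that the pointwise domination is preserved through the convolution, which is precisely where the nonnegativity of the coefficients of $\map I_\nu$, forced by $r$ odd, does all the work. A secondary point to handle with care is that the Lagrange bound must be made uniform over the convolution's integration range; this is exactly the role of the free anchor $y\ge x$, which lets $\map R_i$ dominate $\op R_{m_i}\map I_\nu$ on the whole relevant interval rather than at a single point, so that the monomial bound can be convolved in closed form.
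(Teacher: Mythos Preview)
Your proposal is correct and follows essentially the same route as the paper's own proof: apply the closed two-term density (the paper's \prettyref{cor:chiconv}) to each block $a_iX_i+b_iY_i$, convolve, split each Bessel factor via Taylor's theorem, and control the Lagrange remainder using that $\map I_\nu^{(k)}$ is strictly increasing on $\sset R_{>0}$ for $\nu\ge0$. The paper condenses all of this into two sentences; your write-up makes explicit the multilinear expansion into $2^n$ summands, the nonnegativity of the Taylor coefficients of $\map I_\nu$ for $\nu\in\sset N_0$, and the telescoping monotonicity of convolution under pointwise domination---but these are exactly the ingredients the paper gestures at with ``straightforward application of Taylor's theorem'' and ``simple uniform estimate.''
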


This result---simple as it is---does not look particularly promising, since it contains a convolution of exponentially many terms. Fortunately, though, the next statement shows that this method is indeed surprisingly easy to compute, at least for $r=1$.
\begin{theorem}\label{th:lincombeff}
Let the setup be as in \prettyref{th:lincomb}.
 Then the computational complexity to calculate a value $x$ of $f_Z$ with relative error smaller than $R_\mathrm{max}\in(0,1)$ 
is $\map O(n^2r^2 \bar y^2 (\log rn-\log(R_\mathrm{max})))$, where $\bar y:=x\max_{i=1,...,n}\{\frac{b_i-a_i}{4b_ia_i}\}$.
\end{theorem}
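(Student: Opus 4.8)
The plan is to read the cost of evaluating $\map T(x)$ directly off the structural form exposed by \prettyref{th:lincomb}, and to split it into an arithmetic-operation count times a working-precision estimate. First I would record the shape of each convolution factor: since $\op T_{m_i}\map I_{\frac r2-\frac12}$ is a polynomial of degree $m_i$, each factor $\ee^{-\frac{a_i+b_i}{4a_ib_i}\cdot}\,(\op T_{m_i}\map I_{\frac r2-\frac12})(\frac{b_i-a_i}{4a_ib_i}\cdot)$ is a polynomial times the single decaying exponential $\ee^{-\alpha_i t}$ of rate $\alpha_i:=\frac{a_i+b_i}{4a_ib_i}=\frac14(\frac1{a_i}+\frac1{b_i})$. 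The hypothesis $a_1<b_1<\dots<a_n<b_n$ forces the $\alpha_i$ to be pairwise distinct, since $(a,b)\mapsto\frac14(\frac1a+\frac1b)$ is strictly decreasing along that chain; this is exactly the non-degeneracy needed below.

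The key structural step is a closure lemma: the class of finite sums $\sum_j\ee^{-\alpha_j t}P_j(t)$ with \emph{distinct} rates $\alpha_j$ and polynomial coefficients $P_j$ is closed under the convolution $\bar*$, and the convolution of two such objects is computable in closed form by repeated integration by parts (equivalently, a partial-fraction identity in the rates). I would build $\map T$ incrementally, $H_{k+1}=H_k\,\bar*\,g_{k+1}$, maintaining $H_k=\sum_{j\le k}\ee^{-\alpha_j t}P_j^{(k)}(t)$ and truncating every $P_j^{(k)}$ back to degree $\map O(\bar m)$, where $\bar m=\max_i m_i$ (the discarded high-order terms are governed by the same remainder that controls the global error, so this truncation is harmless). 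The crucial consequence is that the number of distinct exponential modes never exceeds $n$, so the exponential blow-up suggested by an $n$-fold convolution never occurs. Counting: there are $\map O(n)$ convolution steps, at each step $\map O(n)$ modes are updated, and each update is a polynomial operation on objects of degree $\map O(\bar m)$, for a total of $\map O(n^2\bar m^2)$ arithmetic operations.

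It then remains to pin down $\bar m$ and the working precision. For the first, I would invert the remainder estimate of \prettyref{th:lincomb}: the bound $\map R_i(x)\le\frac{(\beta_i x)^{m_i+1}}{(m_i+1)!}\,\map I_{\frac r2-\frac12}^{(m_i+1)}(\beta_i y)$, with $\beta_i:=\frac{b_i-a_i}{4a_ib_i}$, decays super-geometrically once $m_i$ passes the peak index, so after distributing the global budget $R_\mathrm{max}$ across the $n$ factors by multilinearity of $\bar*$ it suffices to take $m_i=\map O(r+\bar y+\log(rn/R_\mathrm{max}))$; the additive $r$ enters because the Taylor order must at least reach the leading degree $\tfrac{r-1}{2}$ of $\map I_{\frac r2-\frac12}$. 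In the regime of interest this gives $\bar m=\map O(r\bar y)$, hence $\bar m^2=\map O(r^2\bar y^2)$ and the $n^2r^2\bar y^2$ factor in the operation count. For the working precision I would bound the cancellation in the signed sum $\sum_j\ee^{-\alpha_j x}P_j(x)$, whose individual modes can be exponentially larger than the result: one shows the associated condition number is polynomial in $n$, $r$ and $\bar y$, so carrying $\map O(\log rn-\log R_\mathrm{max})$ guard digits per operation suffices for \emph{relative} error $R_\mathrm{max}$. Multiplying the operation count by this per-operation cost yields the claimed $\map O(n^2r^2\bar y^2(\log rn-\log R_\mathrm{max}))$.

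The hard part will be the two error estimates, not the bookkeeping of the convolution. Controlling $\bar m$ requires showing that the truncation error is not amplified through the $n$ successive convolutions; this is where the positivity $0<\map R<\bar{\map R}-\map T$ and the multilinearity of $\bar*$ from \prettyref{th:lincomb} must be combined to reduce the propagated error to a sum of single-factor remainders, each tamed by the factorial. The genuinely delicate point is the cancellation bound: one must argue that the condition number of the mode sum grows at most polynomially in $n$, $r$ and $\bar y$, so that its logarithm contributes only the stated $\log rn-\log R_\mathrm{max}$ and the precision does not silently reintroduce an extra power of $\bar y$ or $n$ into the complexity.
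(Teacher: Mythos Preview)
Your structural skeleton---closure of the exponential-times-polynomial class under $\bar*$ (the paper's \prettyref{lem:convalg} and \prettyref{lem:conveff}), incremental convolution $H_{k+1}=H_k\,\bar*\,g_{k+1}$, and inversion of the remainder bound to fix the Taylor order---is exactly the paper's. Where you diverge is in the origin of the two non-$n^2$ factors. The paper introduces \emph{no} bit-precision or condition-number layer: its complexity is a raw count of pairwise monomial-times-exponential convolutions at unit cost, and the logarithmic factor comes entirely from the Taylor order itself (each of the $n-1$ convolutions is allotted relative error $R_\mathrm{max}/(n-1)$, which via $\bar y^{m}/m!\le R_\mathrm{max}/n$ forces the $m_i$ to carry a $\log n-\log R_\mathrm{max}$ contribution). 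Your added precision argument is not in the paper, and the polynomial-condition-number claim you flag as ``genuinely delicate'' is simply never addressed there. Likewise, the paper obtains the $r^2$ factor not through the leading degree of $\map I_{\frac r2-\frac12}$ but by the cruder reduction of $\chi^2_r$ to a sum of $r$ independent $\chi^2_1$ variables and the substitution $n\mapsto rn$ in the $n^2$ count. Finally, your explicit truncation of each $P_j^{(k)}$ back to degree $\mathcal O(\bar m)$ is unnecessary: \prettyref{lem:convalg} already guarantees the polynomial degree never exceeds the maximum input degree, so there are no high-order terms to discard. (The paper's own derivation is admittedly loose---the proof text produces a $\bar y^4$ where the statement has $\bar y^2$---so exact bookkeeping is not really the point; your route and the paper's both reach the stated bound under comparable hand-waving.)
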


In other words, this means that the algorithm is at most polynomial in the number of terms in our sum, polynomial in the degrees of freedom, polynomial in $\bar y$ as given above and logarithmically in the precision we want to obtain.

Our numerical analysis shows that this bound is by no means optimal.

We will now give a rather detailed proof. The mathematics involved is not particularly deep, but contains some ideas that are essential for later use.

\subsection{The Linear Combination of Two Gamma and $\chi^2$ Random Variables}
\begin{definition}Let $X\sim\Gamma(\alpha,\beta)$ be a gamma-distributed random variable with shape parameter $\alpha\in\sset R_{>0}$ and rate parameter $\beta\in\sset R_{>0}$. Then its density function $f_X$ is given by
\begin{equation}
f_X(x)=\theta(x)\frac{\beta^\alpha}{\Gamma(\alpha)} x^{\alpha-1} e^{-\beta x}\point
\end{equation}
\end{definition}
Note the following
\begin{remark}
If $X\sim\Gamma(\alpha,\beta)$, then $cX\sim\Gamma(\alpha,c\beta)$ for $c\in\sset R_{>0}$.
\end{remark}
This is why $\beta$ is also called \emph{inverse shape parameter}.
This immediately leads me to the following
\begin{lemma}\label{lem:gammaconv}
Let $Z=X+Y$ where $X\sim\Gamma(\alpha_1,\beta_1)$ and $Y\sim\Gamma(\alpha_2,\beta_2)$ are independent Gamma distributions. Then the density function $f_Y$ is given by
\begin{equation}\label{eq:gammaconv}
f_Z(z)=\theta(z)\frac{\beta_1^{\alpha_1}\beta_2^{\alpha_2}}{\Gamma(\alpha_1+\alpha_2)} z^{\alpha_1+\alpha_2-1} \ee^{-\beta_2 z} {_1\map F_1}(\alpha_1;\alpha_1+\alpha_2;(\beta_2-\beta_1)z)\comma
\end{equation}
where $_1\map F_1$ is a confluent hypergeometric function (Kummers function of the first kind).
\end{lemma}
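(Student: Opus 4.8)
The plan is to compute $f_Z$ directly as the convolution of the two gamma densities, which is available because $X$ and $Y$ are independent. Since both densities vanish on the negative half-line, for $z>0$ one has
\[
f_Z(z)=\int_0^z f_X(t)\,f_Y(z-t)\,\dd t
=\frac{\beta_1^{\alpha_1}\beta_2^{\alpha_2}}{\Gamma(\alpha_1)\Gamma(\alpha_2)}\,\ee^{-\beta_2 z}\int_0^z t^{\alpha_1-1}(z-t)^{\alpha_2-1}\,\ee^{-(\beta_1-\beta_2)t}\,\dd t\comma
\]
where I have combined the two exponentials via $\ee^{-\beta_1 t}\ee^{-\beta_2(z-t)}=\ee^{-\beta_2 z}\ee^{-(\beta_1-\beta_2)t}$ and pulled the $t$-independent factor $\ee^{-\beta_2 z}$ out of the integral.

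First I would rescale the integration variable by $t=zu$, which maps the domain to the unit interval and factors out the power of $z$:
\[
\int_0^z t^{\alpha_1-1}(z-t)^{\alpha_2-1}\,\ee^{-(\beta_1-\beta_2)t}\,\dd t
=z^{\alpha_1+\alpha_2-1}\int_0^1 u^{\alpha_1-1}(1-u)^{\alpha_2-1}\,\ee^{(\beta_2-\beta_1)zu}\,\dd u\point
\]
The remaining integral is precisely the Euler integral representation of Kummer's function, ${_1\map F_1}(a;b;x)=\frac{\Gamma(b)}{\Gamma(a)\Gamma(b-a)}\int_0^1 u^{a-1}(1-u)^{b-a-1}\,\ee^{xu}\,\dd u$, valid for $\Re b>\Re a>0$. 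Matching the exponents of $u$ and $1-u$ and the exponential gives $a=\alpha_1$, $b-a=\alpha_2$ hence $b=\alpha_1+\alpha_2$, and $x=(\beta_2-\beta_1)z$. Substituting this identity back and cancelling the $\Gamma(\alpha_1)\Gamma(\alpha_2)$ produced by the representation against those in the prefactor leaves exactly the claimed expression, with the Heaviside factor $\theta(z)$ accounting for $z<0$.

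The only points requiring care are bookkeeping ones. The Euler representation demands $\Re b>\Re a>0$, which holds here since $\alpha_1,\alpha_2\in\sset R_{>0}$, so no analytic continuation is needed; and the convolution integral converges at both endpoints for precisely the same reason. I therefore do not anticipate a genuine obstacle: the entire content is the recognition of the rescaled Beta-type integral as ${_1\map F_1}$, after which the result follows by direct substitution. An alternative route through the moment generating function $(1-s/\beta_1)^{-\alpha_1}(1-s/\beta_2)^{-\alpha_2}$ would require an inverse Laplace transform and is strictly more laborious, so the convolution computation is the natural choice.
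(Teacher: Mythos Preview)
Your proposal is correct and follows essentially the same route as the paper: write $f_Z$ as the convolution of the two gamma densities, pull out $\ee^{-\beta_2 z}$, substitute $t=zu$ to reduce to a Beta-type integral over $[0,1]$, and identify it with the Euler integral representation of ${_1\map F_1}$. Your added remark on the validity condition $\Re b>\Re a>0$ is a nice touch beyond what the paper records.
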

\begin{proof}
We have to calculate the convolution $f_Z\equiv f_X*f_Y$ over $\sset R$.
\begin{align*}
(f_X&*f_Y)(z)\\
&=\frac{\beta_1^{\alpha_1}\beta_2^{\alpha_2}}{\Gamma(\alpha_1)\Gamma(\alpha_2)}\int_{\sset R}\theta(x)\theta(z-x) x^{\alpha_1-1}(z-x)^{\alpha_2-1}\ee^{-\beta_1 x-\beta_2(z-x)}\dd x\\
&=\frac{\beta_1^{\alpha_1}\beta_2^{\alpha_2}}{\Gamma(\alpha_1)\Gamma(\alpha_2)} \ee^{-\beta_2 z} \theta(z) \int_0^z x^{\alpha_1-1}(z-x)^{\alpha_2-1}\ee^{(\beta_2-\beta_1) x}\dd x\comma
\end{align*}
which can be further simplified by substituting $x=zt\Rightarrow\dd x=z\dd t$, so
\begin{align*}
(f&_X*f_Y)(z)\\
&=\theta(z) \frac{\beta_1^{\alpha_1}\beta_2^{\alpha_2}}{\Gamma(\alpha_1)\Gamma(\alpha_2)} \ee^{-\beta_2 z} z^{\alpha_1-1+\alpha_2-1+1}\int_0^1t^{\alpha_1-1}(1-t)^{\alpha_2-1} \ee^{(\beta_2-\beta_1) zt}\dd t\\
&\equiv\theta(z)\frac{\beta_1^{\alpha_1}\beta_2^{\alpha_2}}{\Gamma(\alpha_1)\Gamma(\alpha_2)} \ee^{-\beta_2 z} z^{\alpha_1+\alpha_2-1}\frac{\Gamma(\alpha_1)\Gamma(\alpha_2)}{\Gamma(\alpha_1+\alpha_2)}{_1\map F_1}(\alpha_1;\alpha_1+\alpha_2;(\beta_2-\beta_1)z)\\
&=\theta(z)\frac{\beta_1^{\alpha_1}\beta_2^{\alpha_2}}{\Gamma(\alpha_1+\alpha_2)} z^{\alpha_1+\alpha_2-1} \ee^{-\beta_2 z} {_1\map F_1}(\alpha_1;\alpha_1+\alpha_2;(\beta_2-\beta_1)z)\comma
\end{align*}
where we have used the integral representation of $_1\map F_1$ in the second to last line.
\end{proof}
Observe that \prettyref{eq:gammaconv} is indeed symmetric under exchange of the subscripts ${}_1\leftrightarrow{}_2$, as expected.

\begin{definition}\label{def:chisq}
Let $X\sim\chi^2_k$ be a chi-square random variable with $k$ degrees of freedom. Then the density function $f_X$ is given by
\begin{equation}
f_X(x)=\theta(x)\frac{x^{\frac{k}2-1} e^{-\frac x2}}{2^{\frac k2}\Gamma\left(\frac k2\right)}\ \point
\end{equation}
\end{definition}

\begin{remark}\label{rem:chigamma}
If $X\sim\chi^2_k$, then $aX\sim\Gamma\left(\frac k2, \frac1{2a}\right)$ for $a\in\sset R_{>0}$.
\end{remark}

As a second important result, which will find its application later, we conclude the following
\begin{corollary}\label{cor:chiconv}
Let $X,Y\sim\chi^2_k$ two iid chi-square random variables with $k$ degrees of freedom. Let $Z:=aX+bY$, $a,b\in\sset R_{>0}$. Then the density function $f_Z$ is given by
\begin{equation}\label{eq:chiconv}
f_Z(z)=\theta(z)\frac1{(4ab)^k} \left(\frac{a-b}{8ab}\right)^{\frac12-\frac k2}\frac{\Gamma\left(\frac12+\frac k2\right)}{\Gamma(k)}\ \ee^{-\frac{a+b}{4ab}z}\ x^{\frac k2-\frac12}\ \map I_{\frac k2-\frac 12}\left(\frac{b-a}{4ab}z\right)\comma
\end{equation}
where $\map I_\nu$ is the $\nu$th order modified \knownth{Bessel} function of the first kind.
\end{corollary}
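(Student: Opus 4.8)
The plan is to reduce the problem to the two-variable Gamma convolution already in hand and then recognize the resulting confluent hypergeometric function as a modified Bessel function. First I would use \prettyref{rem:chigamma} to write $aX\sim\Gamma(\tfrac k2,\tfrac1{2a})$ and $bY\sim\Gamma(\tfrac k2,\tfrac1{2b})$, so that $Z=aX+bY$ is precisely a sum of two \emph{independent} Gamma variables with equal shape parameters. This places us directly in the situation of \prettyref{lem:gammaconv} with the identification $\alpha_1=\alpha_2=\tfrac k2$, $\beta_1=\tfrac1{2a}$ and $\beta_2=\tfrac1{2b}$, yielding
\[
f_Z(z)=\theta(z)\frac{(4ab)^{-k/2}}{\Gamma(k)}\,z^{k-1}\,\ee^{-z/(2b)}\,{_1\map F_1}\!\left(\tfrac k2;k;\tfrac{a-b}{2ab}z\right)\point
\]

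Second---and this is the crux---I would convert the ${_1\map F_1}$ into a modified Bessel function. The essential observation is that the \emph{equal} shape parameters force the lower parameter of the Kummer function to be exactly twice the upper one ($\alpha_1+\alpha_2=2\alpha_1$), which is precisely the hypothesis of Kummer's second transformation ${_1\map F_1}(\nu+\tfrac12;2\nu+1;2w)=\Gamma(\nu+1)\,\ee^{w}(w/2)^{-\nu}\map I_\nu(w)$. Setting $\nu=\tfrac k2-\tfrac12$ (so that $\nu+\tfrac12=\tfrac k2$ and $2\nu+1=k$) and $w=\tfrac{a-b}{4ab}z$ rewrites the hypergeometric factor in terms of $\map I_{\frac k2-\frac12}$, at the cost of an additional exponential $\ee^{w}$ and a power $(w/2)^{-\nu}$. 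Note that this step imposes no constraint on $k$, so the corollary holds for general degrees of freedom; the later restriction to odd $r$ in \prettyref{th:lincomb} is needed only for the computational claims, not here.

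Finally I would merge the new pieces into the Gamma-convolution prefactor. The exponential from the Bessel identity combines with $\ee^{-z/(2b)}$, and a short computation of the exponent, $-\tfrac1{2b}+\tfrac{a-b}{4ab}=-\tfrac{a+b}{4ab}$, produces the advertised $\ee^{-\frac{a+b}{4ab}z}$. The power $(w/2)^{-\nu}$ supplies the factor $\left(\tfrac{a-b}{8ab}\right)^{\frac12-\frac k2}$ and, after absorbing $z^{k-1}$, the overall power $z^{\frac k2-\frac12}$; the two Gamma factors collect into $\Gamma(\tfrac12+\tfrac k2)/\Gamma(k)$. I do not expect any analytic difficulty beyond invoking the correct Kummer--Bessel identity: the only real obstacle is bookkeeping, namely keeping the powers of $ab$ and of $z$ consistent and tracking the sign conventions of $\map I_\nu$ at negative argument (the swap $a-b\leftrightarrow b-a$ between the Bessel argument and the prefactor is reconciled through the parity of $\map I_\nu$), so that everything assembles into exactly the stated form.
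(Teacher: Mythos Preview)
Your proposal is correct and follows exactly the route the paper takes: invoke \prettyref{rem:chigamma} to recast $aX$ and $bY$ as Gamma variables with equal shape $\tfrac k2$, apply \prettyref{lem:gammaconv}, and then use Kummer's second transformation ${_1\map F_1}(\alpha;2\alpha;x)=\ee^{x/2}(x/4)^{\frac12-\alpha}\Gamma(\tfrac12+\alpha)\,\map I_{\alpha-\frac12}(x/2)$ to pass to the modified Bessel function. The paper's proof is a one-line citation of precisely these three ingredients; your version simply spells out the intermediate ${_1\map F_1}$ expression and the bookkeeping of exponents and prefactors that the paper leaves implicit.
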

\begin{proof}
This follows from \prettyref{lem:gammaconv}, \prettyref{rem:chigamma} and from the identity (known as \knownth{Kummer's second transform})
\[{_1\map F_1}(\alpha;2\alpha;x)\equiv \ee^\frac x2\left(\frac x4\right)^{\frac 12-\alpha}\Gamma\left(\frac 12+\alpha\right)\map I_{\alpha-\frac12}\left(\frac x2\right)\point\qedhere\]
\end{proof}

Equation \ref{eq:chiconv} struck my interest for the following reason: for small $z$, given $a$ and $b$ are of the same order, the prefactor in the modified \knownth{Bessel} function is small, so a promising ansatz for an approximation should be a \knownth{Taylor-Maclaurin} series for $\map I_\nu$. The exponential then restricts most of the mass to lower values of $z$, which, in addition, suppresses the error introduced from the expansion. Furthermore, the leading nontrivial expansion term of $\map I_0(z)$ is of order $\set O(z^2)$ and for general $\map I_\nu$ only every second order monomial is present. All this will be quantified in the next chapters.

At first, though, we need some more results that will come into play later.

\subsection{Convolution Algebra}
\begin{definition}
Let $\bar*:\C(\sset R)\times\C(\sset R)\rightarrow\C(\sset R)$ be defined as
\[(f\ \bar*\ g)(z)=\int_{\sset R}\theta(x)\theta(z-x)f(x)g(z-x)\dd x\equiv\int_0^z f(x)g(z-x)\dd x\]
for $f,g\in\C(\sset R)$.
\end{definition}
This map is well-defined, as follows from \cite{STEINWEISS_FOURIER}, Th. 1.3, and could well be extended to spaces such as $\set L^1(\sset R)$, but we do not need more here. Important is the following
\begin{lemma}\label{lem:convalg}
Let $\set P_N:=\{\ee^{b_ix}x^{n_i}:b_i\in\sset R,n_i\in\sset N_0,n_i<N\}$ where $N\in\sset N$. Let further $f,g\in\set P_N$: $f(x)=\ee^{ax}x^n$, $g(x)=\ee^{bx}x^m$ and define
\[(f\ \tilde*\ g)(x):=\left\{
  \begin{array}{lr}
    0 &: a=b\\
    (f\ \bar*\ g)(x) &: a\neq b
  \end{array}
\right.\point\]
Then $(\left<\set P_N\right>_{\sset R},\tilde*)$ forms an algebra over $\sset R$ $\forall N\in\sset N$, where $\tilde*$ extends canonically to the linear hull.
\end{lemma}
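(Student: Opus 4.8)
The plan is to verify the two substantive algebra axioms for $(\langle\set P_N\rangle_{\sset R},\tilde*)$, namely that the span is closed under $\tilde*$ and that $\tilde*$ is bilinear; the additive vector-space structure is already given by forming the linear hull. Bilinearity I would treat as essentially built in, once I note that $\set P_N$ is a Hamel basis of its own span: the functions $\ee^{bx}x^n$ indexed by distinct pairs $(b,n)$ are linearly independent, so prescribing $\tilde*$ on ordered pairs of basis elements and extending by bilinearity is consistent and well defined (this is exactly what ``extends canonically to the linear hull'' requires). Everything then reduces to closure on basis elements, i.e. to showing $(\ee^{ax}x^n)\tilde*(\ee^{bx}x^m)\in\langle\set P_N\rangle$ whenever $n,m<N$.

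First I would dispose of the diagonal case $a=b$, where $\tilde*$ returns $0\in\langle\set P_N\rangle$ by fiat. The real content is the off-diagonal case $a\neq b$, where $\tilde*$ coincides with $\bar*$. Here I would pass to Laplace transforms, under which the causal convolution $\bar*$ becomes ordinary multiplication and $\ee^{ax}x^n\mapsto n!/(s-a)^{n+1}$. The transform of the convolution is then $n!\,m!/\bigl((s-a)^{n+1}(s-b)^{m+1}\bigr)$, and since $a\neq b$ its partial-fraction decomposition reads $\sum_{i=1}^{n+1}A_i(s-a)^{-i}+\sum_{j=1}^{m+1}B_j(s-b)^{-j}$. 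Inverting term by term via $(s-a)^{-i}\mapsto z^{i-1}\ee^{az}/(i-1)!$ shows that $(\ee^{ax}x^n)\bar*(\ee^{bx}x^m)$ is a finite real-linear combination of $\ee^{az}z^{i}$ with $0\le i\le n$ and $\ee^{bz}z^{j}$ with $0\le j\le m$. Since $n,m<N$, every monomial degree appearing is $<N$, so the result lies in $\langle\set P_N\rangle$. This is the heart of the argument.

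The step I expect to carry the weight is precisely this degree bound, and the reason the definition of $\tilde*$ excises the diagonal becomes visible right here. When $a\neq b$ the transform has two separate pole clusters, of orders $n+1$ and $m+1$ at the distinct points $a$ and $b$, and partial fractions keeps the two exponentials' polynomial parts separate, capping their degrees at $n$ and $m$; the naive degree $n+m$ one would fear never appears. When $a=b$, however, the two clusters merge into a single pole of order $n+m+2$, whose inverse transform is $\propto z^{\,n+m+1}\ee^{az}$, a monomial of degree $n+m+1$ that generically exceeds $N$ and would destroy closure. Setting $f\tilde* g:=0$ in that case is exactly what keeps $\langle\set P_N\rangle$ closed, and I would flag this explicitly, since it motivates the otherwise mysterious definition of $\tilde*$.

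Finally I would record the remaining structural facts. Commutativity of $\tilde*$ follows from the symmetry of $\bar*$ together with the symmetry of the condition $a=b$. Associativity, on the other hand, genuinely fails—for $a\neq c$ one has $(\ee^{ax}\tilde*\ee^{ax})\tilde*\ee^{cx}=0$ while $\ee^{ax}\tilde*(\ee^{ax}\tilde*\ee^{cx})\neq0$—so ``algebra'' must here be read in the general sense of a vector space equipped with a bilinear (commutative, non-associative) product rather than an associative one. As an elementary alternative to the Laplace computation, the same closure and degree bound follow from writing $\ee^{ax}x^n=\partial_a^{\,n}\ee^{ax}$ and differentiating the base identity $(\ee^{ax})\bar*(\ee^{bx})(z)=(\ee^{az}-\ee^{bz})/(a-b)$ under the integral sign $n$ times in $a$ and $m$ times in $b$, but the partial-fraction route makes the crucial cancellation most transparent.
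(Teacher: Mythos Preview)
Your argument is correct, but it takes a genuinely different route from the paper. The paper reuses the computation from \prettyref{lem:gammaconv}: it writes $(f\ \bar*\ g)(x)$ as $\ee^{bx}x^{n+m+1}$ times ${}_1\map F_1(m+1;n+m+2;(a-b)x)$ and then invokes a closed finite-sum identity for ${}_1\map F_1(r;s;z)$ with integer parameters $r<s$, reading off from that identity that the highest surviving monomial power is $\max(n,m)$. Your proof bypasses the hypergeometric machinery entirely by passing to Laplace transforms and doing a partial-fraction decomposition of $n!\,m!\,(s-a)^{-n-1}(s-b)^{-m-1}$; this is more elementary and makes the degree bound and the role of the hypothesis $a\neq b$ completely transparent, at the price of not tying back into the ${}_1\map F_1$ framework the paper uses elsewhere. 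Your explicit observation that $\tilde*$ is non-associative, and hence that ``algebra'' must be read in the non-associative sense, is a point the paper leaves implicit under ``the other properties are easily derived''; it is worth keeping.
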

\begin{proof}
First note that the use of $\tilde*$ over $\bar*$ is just a technicality---we will not encounter the case of two equal exponentials in our later application. We see that we can use $\bar*$ as long as our exponentials are distinct.

Including the case $a=b$---in which case $f\bar*g$ is clearly nonzero---introduces higher powers of the monomials $x^n$, which would pose a problem later on.

We will only show that the algebra is closed, the other properties are easily derived.
Analogously to the proof of \prettyref{lem:gammaconv}, we have
\[(f\ \tilde*\ g)(x)=\theta(x)\ee^{bx} \frac{\Gamma(m+1)\Gamma(n+1)}{\Gamma(n+m+2)} x^{m+n+1} {_1\map F_1}(m+1;n+m+2;(a-b)x)\point\]
Using that, for integers $r,s\in\sset N_{>0}$, $r<s$, we have (see \cite{W_CONFLUENT})
\begin{align*}
{_1\map F_1}(r;s;z)\equiv&\ \frac{(s-2)!(1-s)_r}{(r-1)!}\ z^{1-s}\ \cdot\\
&\left( \sum_{k=0}^{s-r-1} \frac{z^k (-s+r+1)_k}{k! (2-s)_k}-\ee^z \sum_{k=0}^{r-1}\frac{(-z)^k(1-r)_k}{k!(2-s)_k}\right)\comma
\end{align*}
we note that the lowest occurring power of $z$ is of order $\set O(z^{1-s})$ or, including the previous result, a constant. The highest occuring power is of order $\set O(z^{-r})$ resp. $\set O(z^n)$ or $\set O(z^{r-s})$ resp. $\set O(z^m)$.
\end{proof}

Lemma \ref{lem:convalg} is an important result, as it claims that convolving two terms does not result in higher order monomials in our expression. But we can make even stronger claims.
\begin{remark}\label{rem:conv2}
Let $f_1,...f_n\in\set P_N$, $g_1,...,g_m\in\set P_N$, all $f_i,g_j$ having pairwise distinct exponentials, and $a_1,...,a_n,b_1,...,b_m\in\sset R$. Then $\exists l\le (n+m)N$, $h_1,...,h_l\in\set P_N$ pairwise distinct and $c_1,...,c_l\in\sset R$:
\[\left(\sum_{i=1}^na_if_i\ \bar*\ \sum_{j=1}^mb_jg_j\right)(x)=\sum_{i=1}^lc_ih_i(x)\point\]
\end{remark}
\begin{proof}
Here, the same notation as in the proof for \prettyref{lem:convalg} is used. Counting the number of independent exponentials $l_e$, we obtain $l_\ee\equiv n+m$. The maximum number of monomials multiplied with an exponential is then $\max\{s-r-1,r-1\}=\max\{n,m\}\le N$.
\end{proof}
This by itself can still grow very quickly for more than two convolutions, but the next result puts a stricter bound on the number of terms if we convolve multiple times.
\begin{lemma}\label{lem:conveff}
Let $f_1,...,f_q\in\langle\set P_N\rangle_{\sset R}$ so that $f_i$ has $n_i$ distinct exponentials, and $a_1,...,a_q\in\sset R$. Then $\exists l\le N\sum_{i=1}^qn_i$, $h_1,...,h_l\in\set P_N$ pairwise independent and $c_1,...,c_l\in\sset R$:
\[(a_1f_1\ \bar*\ ...\ \bar*\ a_qf_q)(x)=\sum_{i=1}^lc_ih_i(x)\point\]
\end{lemma}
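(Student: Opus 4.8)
The plan is to argue by induction on the number of factors $q$, fixing once and for all the left-to-right bracketing $(\cdots(a_1f_1\,\bar*\,a_2f_2)\,\bar*\,\cdots\,\bar*\,a_qf_q)$ so that the $q$-fold convolution is assembled from $q-1$ binary ones. At every stage I would carry along two invariants: the finite set of distinct exponentials occurring in the current partial convolution, and the fact that each such exponential is paired only with monomials of degree strictly below $N$, i.e. with at most $N$ distinct monomials. The entire content of the lemma is that the first invariant grows only additively in the $n_i$ while the second never degrades, so that multiplying the two bounds yields $N\sum_{i=1}^q n_i$. As in the hypothesis of \prettyref{rem:conv2} and in the application to \prettyref{th:lincomb}, I take all the exponentials appearing across the $f_i$ to be pairwise distinct, so that each binary convolution falls into the $a\neq b$ branch of \prettyref{lem:convalg}.

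For the base case $q=1$ there is nothing to convolve: since $f_1\in\langle\set P_N\rangle_{\sset R}$ has $n_1$ distinct exponentials and every generator of $\set P_N$ carries a monomial of degree $<N$, the function $f_1$ is already a sum of at most $Nn_1$ pairwise distinct terms of $\set P_N$. For the inductive step, write $g:=a_1f_1\,\bar*\,\cdots\,\bar*\,a_{q-1}f_{q-1}$ and assume it is supported on a set $A$ of distinct exponentials with $|A|\le\sum_{i=1}^{q-1}n_i$, each carrying at most $N$ monomials. Let $B$ be the set of the $n_q$ exponentials of $f_q$. Expanding $g\,\bar*\,a_qf_q$ bilinearly produces a sum of elementary convolutions $\ee^{\alpha x}x^p\,\bar*\,\ee^{\beta x}x^{p'}$ with $\alpha\in A$, $\beta\in B$, $p,p'<N$. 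Here I invoke the closed form from \prettyref{lem:convalg}: for $\alpha\neq\beta$ such a term is a linear combination supported on the two exponentials $\ee^{\alpha x}$ and $\ee^{\beta x}$ only, the monomial on $\ee^{\beta x}$ having degree at most $p<N$ and the one on $\ee^{\alpha x}$ degree at most $p'<N$. Hence no exponential outside $A\cup B$ is created and no degree reaches $N$; after collecting like terms the result is supported on at most $|A\cup B|\le|A|+|B|\le\sum_{i=1}^q n_i$ exponentials, still with at most $N$ monomials each, giving the desired $l\le N\sum_{i=1}^q n_i$ and closing the induction.

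The single nontrivial point, and the step I expect to be the crux, is the degree stability: a bare convolution of monomials $x^p\,\bar*\,x^{p'}$ equals a constant times $x^{p+p'+1}$, whose degree would accumulate under iteration and wreck the polynomial count. What rescues the argument is precisely the mechanism already isolated in \prettyref{lem:convalg}, where Kummer's transformation shows that distinct exponentials redistribute the powers so that each exponential inherits only the \emph{other} factor's degree, keeping everything below $N$. Once this degree cap is granted, the remainder is pure bookkeeping: distinct exponentials combine as a set union, so their number is merely additive, and the uniform cap of $N$ monomials per exponential multiplies through to the stated bound.
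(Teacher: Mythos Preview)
Your proof is correct and follows essentially the same route as the paper's: induction on the number of factors, invoking the closure of the algebra from \prettyref{lem:convalg} for the degree cap and the additive count of exponentials from \prettyref{rem:conv2} for the size of $A\cup B$. The paper's own argument is a two-sentence sketch (exponential count $n_1+n_2$ at the first step, then induction plus closure), and you have simply written out the invariants and the inductive step in full; in particular your explicit identification of ``degree stability'' as the crux is exactly what the paper hides behind the phrase ``being an algebra---is closed''.
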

\begin{proof}
Let us consider the first convolution. We first note that the number of distinct exponentials $l_e$ in \prettyref{rem:conv2} is $l_\ee=n_1+n_2$, even if we replace the monomials with arbitrary polynomials. The rest then follows from induction and the fact that $(\left<\set P_N\right>_{\sset R},\tilde*)$---being an algebra---is closed.
\end{proof}
With these basic facts, let us now discuss the details for \prettyref{th:lincomb} and \prettyref{lem:conveff}. actual algorithm to calculate the distribution of a linear combination of chi square random variables.

\section{The Density Function of a Linear Combination of $\mathbb\chi^2$ Random Variables}
\subsection{Density Function}


\begin{proof}[Proof of \prettyref{th:lincomb}]
The main statements all follow from \prettyref{cor:chiconv} and are a straightforward application of \knownth{Taylor's} theorem, where $\map I_{\frac r2-\frac12}\in\C^\infty(\sset R)$ $\forall r\in2\sset N_0+1$. The estimate for the remainder is a simple uniform estimate, using the fact that $\map I^{(k)}(x)$ is strictly monotonically increasing $\forall x\in\sset R_{>0},\nu\ge0$ and $k\in\sset N_0$.
\end{proof}
It might seem like a restriction that $r$ has to be odd. Note though that the case $r$ even is simple, since---as already mentioned---we would only have to convolve terms from $\set P_N$ (see \prettyref{def:chisq}). Also observe that it is not of great significance that this expansion holds only for an even number of terms, as convolving numerically \emph{once} is easily accomplished.

The claim of \prettyref{th:lincombeff} is that this density is very efficient to calculate to high precision, especially if the argument is not too big. To quantify this a bit more, we have to relate the magnitude of the error to the prefactors $a_i,b_i$ and the abscissa $x$.

\subsection{Error Estimate and Complexity Class}
Let us consider the general case. We are interested in all $x\in[0,x_\mathrm{max})=:\Omega$ for some $x_\mathrm{max}>0$. Let $f,g\in\set L^1(\Omega)\cap\C^\infty(\Omega)$ positive and $\op T_ng\uparrow g$. Then
\begin{align*}
|(f\ \bar*\ &(g-\op T_n g))(x)|=\int_\Omega f(x-y)(g-\op T_n g)(y)\dd y\\
&\le\int_\Omega\|f\|_\infty^\Omega(g-\op T_n g)(y)\dd y=x_\mathrm{max}\|f\|_\infty^\Omega\int_\Omega(g-\op T_n g)(y)\dd y\point
\end{align*}
This of course also holds when we have an additional weight function $\eta$ in the product, which we will denote by $\bar*_\eta$. Assume now we want to have the error ratio below some fixed $R_\mathrm{max}>0$. Then one can estimate
\begin{align*}
\frac{(f\ \bar*_\eta\ (g-\op T_n g))(x)}{(f\ \bar*_\eta\ g)(x)}&\le\frac{x_\mathrm{max}\|\eta f\|_\infty^\Omega\int_\Omega\eta(y)(g-\op T_n g)(y)\dd y}{x_\mathrm{max}\|\eta f\|_\infty^\Omega\int_\Omega\eta(y)g(y)\dd y}\\
&=1-\frac{\int_\Omega\eta(y)(\op T_n g)(y)\dd y}{\int_\Omega\eta(y)g(y)\dd y}=1-\frac{\|\eta(\op T_n g)\|_1^\Omega}{\|\eta g\|_1^\Omega}\overset!\le R_\mathrm{max}
\end{align*}
or
\begin{equation}\label{eq:errorbound1}
\|\eta(\op T_n g)\|_1^\Omega\overset!\ge(1-R_\mathrm{max})\|\eta g\|_1^\Omega\point
\end{equation}
For the numerical algorithm, this is basically what one uses to determine how many \knownth{Taylor} terms have to be kept to satisfy the inequality. This integration can be performed very fast numerically, since the integrand is sufficiently regular and the integration region compact.

Since this result does not depend on $f$---which is one of the reasons why the error is exaggerated gravely, more of this later---we can inductively determine how many terms we have to keep: in the most na\"ive way possible, for $m$ functions, we remain below the error bound $R_\mathrm{max}$ if each convolution separately satisfies the error bound $\frac{R_\mathrm{max}}{m-1}$.

Just to get the order of the complexity class right, let us consider our special case
\[\eta(x)=\ee^{-\frac{a+b}{4ab}x}\mt{and}g(x)=\map I_{\frac r2-\frac12}\left(\frac{b-a}{4ab}x\right)\mt{for}a,b\in\sset R_{>0}\point\]
Then
\[\|\eta(\op T_n g)\|_1^\Omega\ge\|\eta\|_1^\Omega\|T_ng\|_\infty^\Omega\]
using \knownth{Hölder's inequality}, and applying the same to the right hand side of \prettyref{eq:errorbound1}, we obtain the relation
\[\|\op T_n g\|_\infty^\Omega\overset!\ge(1-R_\mathrm{max})\|g\|_\infty^\Omega\point\]
Since $\map I_\nu$ is monotoneously increasing, this is equivalent to
\begin{align*}
(\op T_n g)(x_\mathrm{max})&\ge(1-R_\mathrm{max})g(x_\mathrm{max})
\shortintertext{or}
(\op R_n g)(x_\mathrm{max})&\le R_\mathrm{max}g(x_\mathrm{max})\point
\end{align*}
Let now $y:=\frac{b_i-a_i}{4a_ib_i}x_\mathrm{max}$. Using the uniform estimate from \prettyref{th:lincomb},
\[\frac{y^{n+1}}{(n+1)!}\ \map I_{\frac r2-\frac12}^{(n+1)}(y)\overset!\le R_\mathrm{max}\map I_{\frac r2-\frac12}(y)\point
\]

For $r=1$, e.g. if all our random variables are $\chi^2_1$-distributed, we know that $\map I_0^{(n)}(x)<\map I_0(x)\ \forall x\ge0$. We need
\[\frac{x^n}{n!}\le\ee^{-r}\quad\Leftarrow\quad\log\left(\frac {x^2}n\right)\le-\frac rn\quad\Leftarrow\quad n\ge r\map W^{-1}\left(\frac r{x^2}\right)\comma\]
where, in the second step, the inequality $n!\ge n^{\frac n2}$ is used. For large $n$, it could be replaced by $n!\ge n^{rn}$ for some $r=r_n\in\left[\frac12,1\right)$. $\map W$ is the \knownth{Lambert} $\map W$ function. With this result, we finally require
\begin{align*}
\frac{y^{n+1}}{(n+1)!}\overset!\le R_\mathrm{max}=:\ee^{-r_\mathrm{max}}
\mt{or\ roughly}
n\in\set O(r_\mathrm{max}y_\mathrm{max}^2)\point
\end{align*}
Note that these final estimates tremendously overestimate the error and should not be used to actually determine the number of terms that have to be kept. Equation \ref{eq:errorbound1} gives a far more reliable result.

Combining this with our results from examining the convolution algebra, \prettyref{lem:conveff}, we can finally prove our second main result.

\begin{proof}[Proof of \prettyref{th:lincombeff}]
Consider the $i$th convolution. A straightforward algorithm has to convolve $N_1\cdot i$ terms with $N_2\cdot 1$ terms. Using the notation from above, an upper bound surely is $N_1=N_2\in\set O\left(\left|\log(\frac{R_\mathrm{max}}{n-1})\right|\bar y^2\right)\equiv\set O((\log(n)-\log(R_\mathrm{max}))\bar y)$, thus---in a straightforward implementation---we will have to convolve the terms pairwise, i.e. $i\left((\log(n)-\log(R_\mathrm{max}))\bar y^2\right)^2$ times. Summing all steps up, one concludes that the number of terms that have to be convolved are
\begin{align*}
\map O\!&\left(\sum_{i=1}^{n-1}i\left((\log(n)-\log(R_\mathrm{max}))\bar y^2\right)^2\right)\\
&\qquad=\map O\!\left((\log(n)-\log(R_\mathrm{max}))\bar y^4\cdot\frac12n(n-1)\right)\\
&=\map O\!\left(\frac{n^2\log(n)}{\log(R_\mathrm{max})} \bar y^4 \right)\point
\end{align*}

We can recover the case of arbitrary degrees of freedom $r$ by just adding $r$ $\chi^2_1$ random variables, so scaling $n$ by $r$ yields the desired result.
\end{proof}

This of course neglects implementation details---we assume for example that the integration estimate \prettyref{eq:errorbound1} is constant for all terms. The convolution itself can be done by multiplying matrices and then sorting vectors, which is more likely to be in $\set O(n\log n)$, but this would only introduce a minor change in our end result.

Moreover, one finds that for our application (see \prettyref{sec:application}), the overall growth is rather linear in $n$, so it remains an open question to obtain a better estimate on the complexity of the algorithm and to generalize the result to arbitrary $r$.

\subsection{A Better Error Estimate}
Just as a side note and to prove that it is rather simple to optimize this algorithm further, note the following
\begin{remark}
Let $\Omega\subset\sset R$ be a closed interval, $f\in\C^\infty(\Omega):f^{(k)}(x)\ge0\ \forall x\in\Omega$ and $m\ge n\in\sset N_0$ arbitrary. Then
\[(\op R_nf)(x)\le\left((\op T_m-\op T_n)f\right)(x)+M_m\quad\forall x\in\Omega\comma\]
where $(\op R_mf)(x)\le M_m\ \forall x\in\Omega$.
\end{remark}
This obvious application of the $Lagrange$ remainder formula can be used to reduce the expansion order for the error bound. We thus save ourselves almost \emph{half} of the work, namely the expansion of the estimate polynomial. We have found this improvement to be significant.


\section{Numerical Analysis}
We now turn to a numerical analysis of our proposed technique, \prettyref{th:lincomb}. The implementation is quite straightforward and done using \textsc{Mathematica}. As already mentioned, \prettyref{eq:errorbound1} is used to decide how many \knownth{Taylor} terms to keep in each step, i.e. the $m_i$ are determined this way. The actual convolution can be implemented very efficiently as a simple algebraic method, since we never leave our $\bar*$ algebra.

\subsection{Comparisons with other Methods}

\begin{figure}
\subfigure[Density, few expansion terms]{\includegraphics[width=0.49\textwidth]{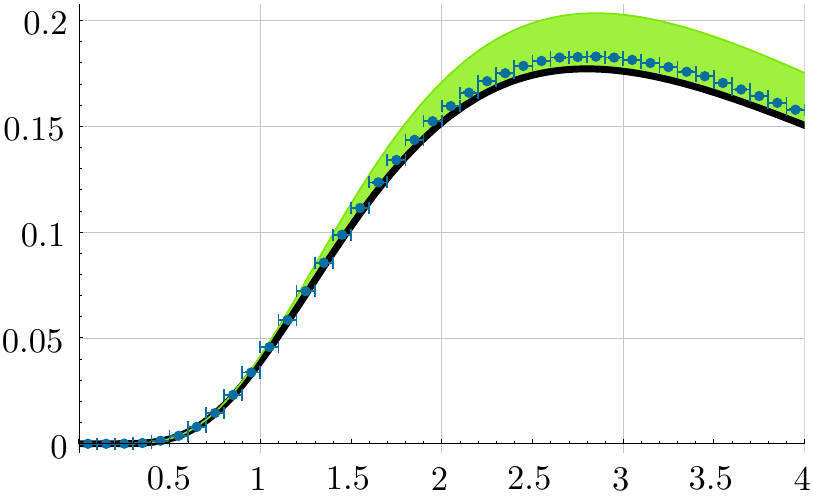}}\hfill
\subfigure[Density, many expansion terms]{\includegraphics[width=0.49\textwidth]{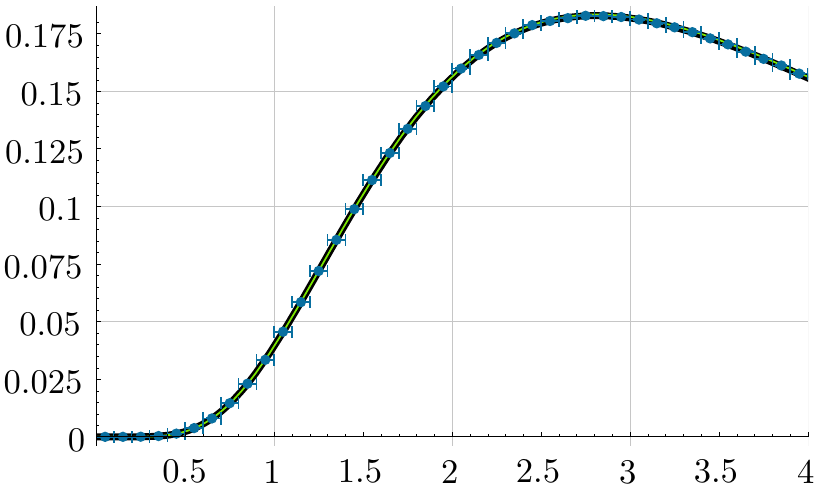}}
\subfigure[Absolute errors, few expansion terms]{\includegraphics[width=0.49\textwidth]{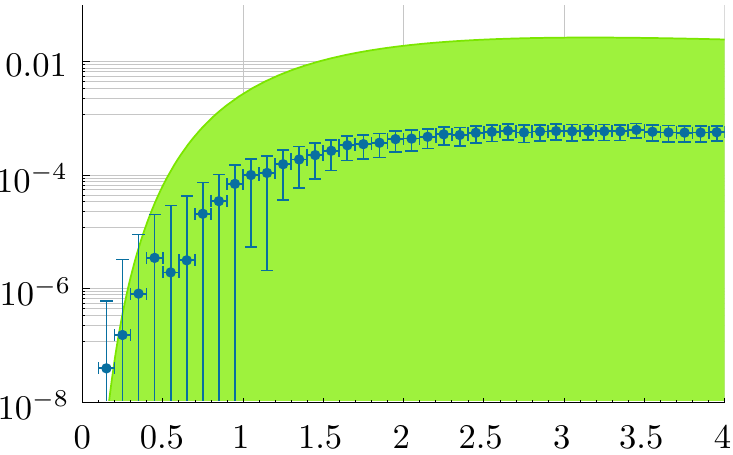}}\hfill
\subfigure[Absolute errors, many expansion terms]{\includegraphics[width=0.49\textwidth]{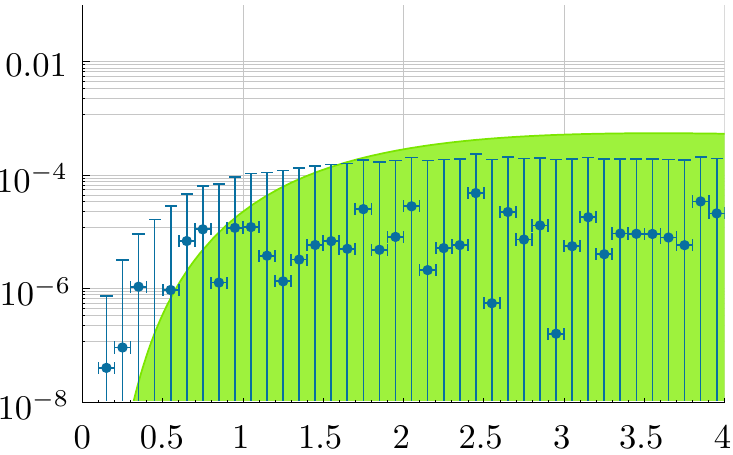}}
\caption{The distribution for $20$ $\chi^2$ random variables. Figure (a) shows the density compared to an empirical histogram distribution. Figure (c) shows the absolute predicted error vs. the true error, compared to the histogram distribution. (b) and (d) is are the same, with more expansion terms. Note that the uncertainty in the histogram distribution is---due to computational restrictions---huge.}
\label{fig:distcomp1}
\end{figure}
One way of checking our results quantitatively is comparing them to \knownth{Monte Carlo} simulations, namely building a histogram distribution from a table of drawn values. This will only work in the regime with a lot of mass in the distributions, the tails will not receive enough data points to be quantitatively correct.

In \prettyref{fig:distcomp1}, you can see the density for $n=20$, where the weights were taken from \prettyref{eq:marpast}. Depending on the threshold that is taken for the error bound, \prettyref{eq:errorbound1}, the expansion comes arbitrarily close to the empirical distribution. Note that the error bound shrinks accordingly and---as expected---we only have an error in the positive $y$ direction. Also observe that our error bound still vastly overestimates the true error.

\begin{figure}
\subfigure[\knownth{Laguerre} expansion]
{\includegraphics[width=0.49\textwidth]{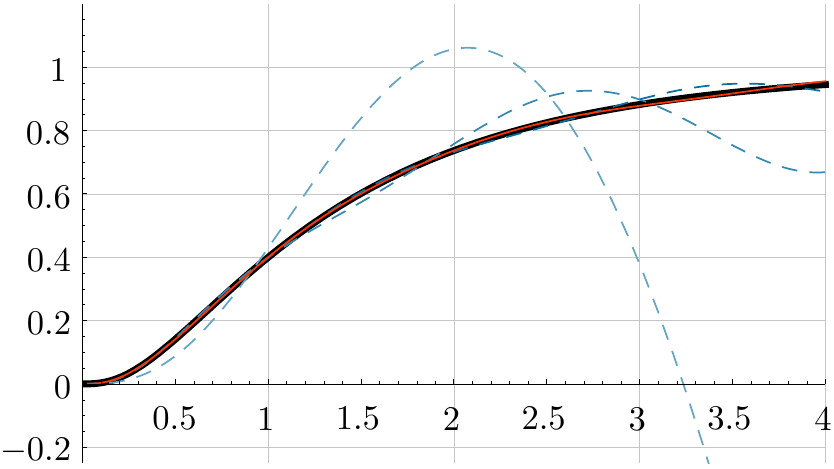}}\hfill
\subfigure[Numerical integration]{\includegraphics[width=0.49\textwidth]{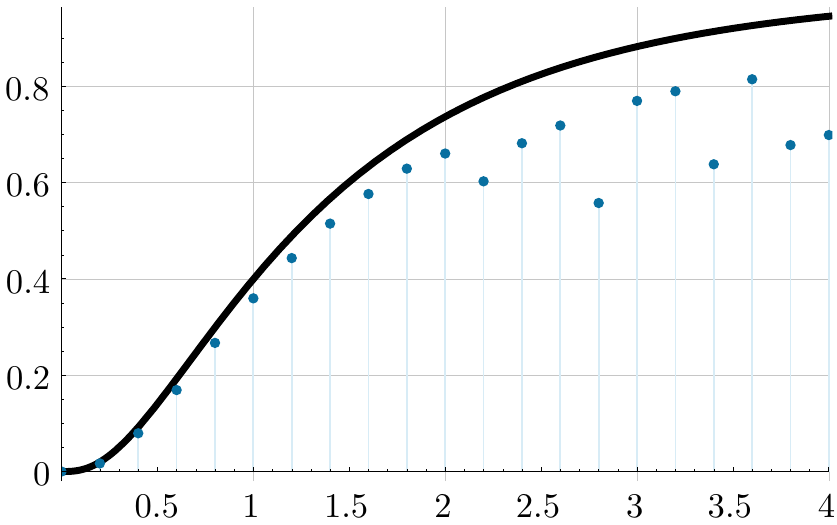}}
\caption{The cumulative distribution for $6$ $\chi^2$ random variables. Figure (a) shows an expansion in Laguerre polynomials as proposed by \cite{MARTINEZ_SERIES}, with $1$, $6$, $11$ (blue, dashed) and $16$ (orange, overlays thick black line) expansion terms. Figure (b) shows a numerical integration proposed by \cite{AKKOUCHI_MULTIINT_GAMMA}. The black line is my method, which is exact at the shown resolution.}
\label{fig:comp1}
\end{figure}
In \prettyref{fig:comp1}, my method is compared to two more recently published techniques, where the six weights are again taken from \prettyref{eq:marpast}. The series expansion (a) can be made quite accurate (although the error bound---not shown---is very bad and suggests we expand to much higher order), at the cost of exponentially growing computation time. For more than $10$ terms, it seems impossible to expand to high enough order in any sensible time.

On the other hand, the numerical integration---performed with \textsc{Mathematica}'s adaptive quasi \knownth{Monte Carlo} method with about $10^5$ integrand evaluations---suffers from severe convergence problems.

Just as a comparison: The \knownth{Laguerre} expansion takes about $10$ seconds for $16$ terms, the numerical integration about $250$ seconds (for all the values shown---for a single value it is about $10$ seconds), whereas my method needs about $0.2$ seconds. While not representative, this should give a rough idea of what to expect; they were all run on one i7-2670QM core. While it is true that implementing the \knownth{Laguerre} expansion or the numerical integration in C could be orders of magnitudes faster, mind that our method is executed by \textsc{Mathematica}, too.

\section{Counting String Vacua}\label{sec:application}
\subsection{Background and Motivation}
String theory admits some $10^{500}$ vacuum solutions---for an interesting digression, see \cite{SUSSKIND_STRINGVAC}. They emerge from the parameter space describing the internal compact manifold, which string theory requires in addition to our $3+1$ dimensional spacetime. This parameter space is called the \emph{moduli space of supersymmetric vacua}. At low energies, these moduli appear as massles scalar fields with their own equations of motion and potential.

The question of how many of those configurations correspond to physically stable solutions is an active field of research.
One vacuum configuration---a \knownth{deSitter} spacetime---is particularly physically motivated, since it agrees with the recent discovery of an accelerated expanding universe.

A more recently published work by \cite{Bachlechner:2012at} analytically calculates the probability of metastable vacua in exactly supersymmetric scenarios.

In a general supersymmetric AdS vacuum, all the masses are bigger than the Breitenlohner-Freedman bound, which is $m_{min}^s=-\frac94|W|^2<0$ in four dimensions (see $2.7$ in \cite{Bachlechner:2012at}). Thus the probability that all masses are positive, has been calculated to go as 
\[P_N=\exp(-c_2 N^2)\quad\text{where}\quad c_2=2\frac{|W|^2}{m_\mathrm{susy}^2}\comma\]
where $W$ denotes the superpotential.
By tuning this parameter $W/m_\mathrm{susy}$ accordingly, one can make the probability of tachyonic directions either arbitrarily small or large---the interesting question is what a typical value of this parameter is. Some answers to this question can also be found in \cite{Bachlechner:2012at}.

In \cite{MARSH_RANDGRAV}, $W$ and $F_I={\cal D}_IW$ are very small, approaching a supersymmetric \emph{Minkowski} vacuum, where there is no supression---we have $W\equiv0$ and all masses are positive. But since the authors want to keep a deSitter vacuum---i.e. only approximately approach a Minkowski vacuum---the sGoldstino in general has a negative mass.
What was calculated in \cite{MARSH_RANDGRAV} was the probability that \emph{this} direction is not tachyonic. One important point to notice is that it is not at all obvious that \knownth{deSitter} vacua arising from spontaneous symmetry breaking are much more common than \knownth{deSitter} vacua from uplifting of a supersymmetric \knownth{AdS} vacuum. This question has to be addressed by calculation and is not something we know \knownth{a priori}.

Following the work of \cite{MARSH_RANDGRAV}, in the study of $\mathcal N=1$ supergravities, it was shown that the fraction of metastable \knownth{deSitter} vacua shrinks exponentially with the number of scalar fields $N\gg1$. By constructing a random matrix model for the Hessian matrix $\op H$ of the scalar potential and calculating its eigenvalue spectrum, local minima can be found.

The suggested functional dependence for $N$ fields for the probability of the smallest eigenvalue fluctuating to a positive value---which renders $\op H$ positive definite---was
\begin{equation}\label{eq:statdep}
P_N\propto\exp\left(-c N^p\right)\mt{where}c,t>0\point
\end{equation}

The Hessian matrix can be block-diagonalized to $\op H=\map{diag}(m_-,m_+)$, where $m_\pm$ denotes a mass matrix. An important result is the explicit expression for the smallest eigenvalue of the mass matrix,
\[m_\pm^2=F^2\mathcal T_\pm-F^2\mathcal S\point\]
$\mathcal T_\pm$ and $\mathcal S$ are given by
\begin{align}
\mathcal T_\pm&=\frac23\omega^2+K_{11}^eK_{\bar1\bar1e}-K_{1\bar11\bar1}\pm\underbrace{\left|U_{111}F^{-1}\ee^{-\theta_F}-\frac23\omega^2 \ee^{2\ii(\theta_F-\theta_W)}\right|}_{=:|t_\mathrm{hol}|}\comma\label{eq:mT}\\
\mathcal S&=\sum_{b'=2}^N\frac{|U_{11b'}|^2}{\lambda_{b'}^2}
\intertext{where}
\omega&=\frac{\sqrt3}F|W|\mt{and}F\propto\frac1{\sqrt N}\point\notag
\end{align}
Here, $K$ denotes the Kähler potential and $W$ the superpotential, which are taken to be random functions. $U$ and $F$ are derivatives of $W$.
It was claimed in \cite{MARSH_RANDGRAV} that
the statistical properties of $\mathcal T_\pm$ can be neglected for large $N$ and replaced by its expectation value, i.e. $\mathcal T_-\approx1$. While qualitatively correct for the limiting distribution, this approximation neglects a number of important aspects, so let us revisit this argument  briefly.

The first term in $\mathcal T_-$, $\omega^2$, does not depend on $N$ and we can thus ignore it, since we rescale our results anyways---for $\omega\in[0,1]$, the introduced error is negligible. The term $-|t_\mathrm{hol}|$ is indeed negative semi-definite, and for a conservative estimate we can set $|t_\mathrm{hol}|$ to $0$, as expained in \cite{MARSH_RANDGRAV}.

For $|K^{(3)}|^2$, which is distributed as $\frac1N\chi^2_N$ with $\langle|K^{(3)}|^2\rangle=1$, it was claimed that the fluctuation probability is negligible for large $N$, stating the central limit theorem. While certainly correct for very large $N$, distributions with nonzero skewness tend to converge slowly to a normal distribution. Furthermore, the width of this process scales as $N^{-1/2}$.
The same argument holds true for the third term in \prettyref{eq:mT}, $K_{1\bar11\bar1}^{(4)}\sim\mathcal N(0,N^{-1/2})$, so even for large $N=\mathcal O(10^3)$, fluctuation of these two terms cannot be dismissed.

\ref{sec:afprob} contains an overview over the different terms comprising $m^2_-$ where the just-mentioned points can be seen explicitly. Comparing the convolutions from the middle and right column, the reader can observe that the fluctuation probability of the $\mathcal T$-term indeed plays an important role, and still affects the overall magnitude quite significantly, even for larger $N$.

For the sake of demonstrating the usefulness of our method to the present problem, however, we will revert temporarily to ignoring the $\mathcal T$-terms and try to calculate the leading contribution to the fluctuation probability of $m^2_-$,
i.e. $\p(\mathcal S\le1)$. We will reintroduce the $\mathcal T$ contributions in the next section.

The $U_{11b'}$ are $N-1$ independent and identically normally distributed random variables $\sim\mathcal N(0,1)$, while the denominators $\lambda_{b'}^2$ are determined by the \knownth{Marčenko-Pastur} law
\begin{equation}\label{eq:marpast}
f_\mathrm{MP}(\lambda)=\frac1{2\pi\lambda N\sigma^2}\sqrt{\lambda\left(4N\sigma^2-\lambda\right)}\mt{where} \sigma=\frac1{\sqrt N}\point
\end{equation}
It can be justified to take the weights $\lambda_{b'}^{-2}$ such that
\begin{equation}\label{eq:eigenvalues}
\lambda_{b'}^2=\avg{\lambda_{b'}^2} \mt: \int_0^{\avg{\lambda_{b'}^2}}f_\mathrm{MP}(\lambda)\dd\lambda=\frac{b'}N\point
\end{equation}
This corresponds to numerically inverting the function $f_\mathrm{MP}$ on the range $(0,4)$, which is well defined (unfortunately, this cannot be done analytically).
 $f_\mathrm{MP}$ and the weights for specific $N$ can be seen in \prettyref{fig:mpastur}.
\begin{figure}
\subfigure[]{\includegraphics[width=0.49\textwidth]{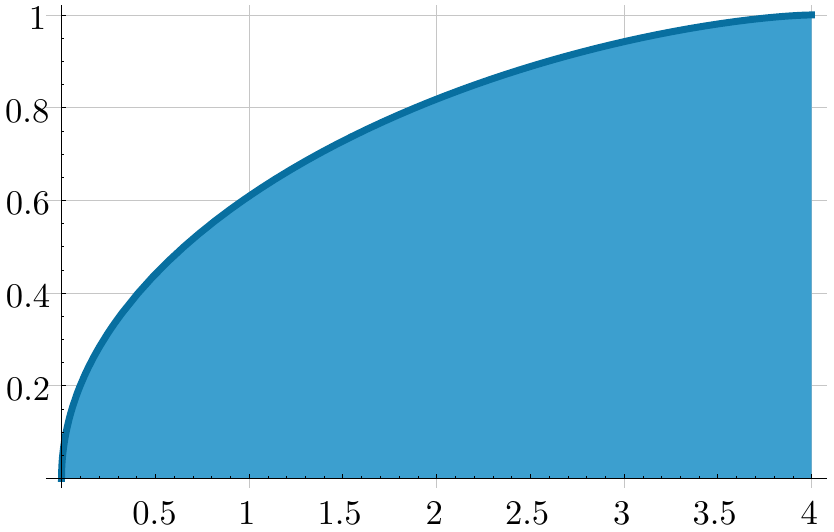}}\hfill
\subfigure[]{\includegraphics[width=0.49\textwidth]{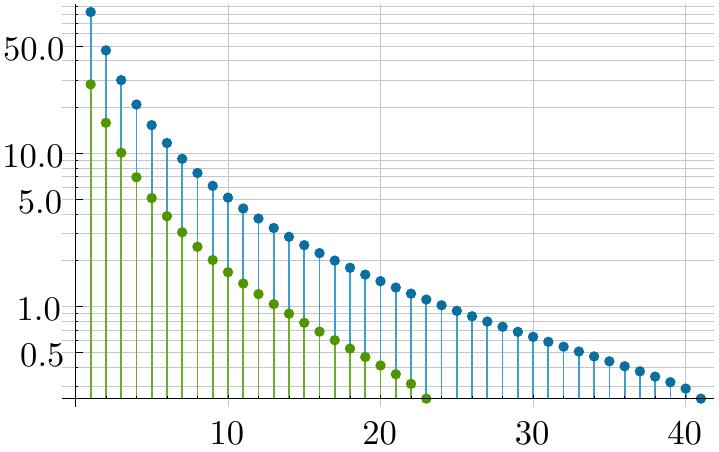}}
\caption{(a) The \knownth{Marčenko-Pastur} law. (b) Weights for $N=24$ and $N=42$, as determined by \prettyref{eq:eigenvalues}.}
\label{fig:mpastur}
\end{figure}

It is important to point out that the prefactors pose another hard condition on the problem. Rather elegant methods such as the $\chi^*$ approach in \cite{LindsayChiStar} cannot be used to extract further useful information about this specific case of a linear combination of $\chi^2$ random variables, since a closed expression for the prefactors is not known.

In order to calculate the fluctuation probability $\p(\mathcal S\le1)$, we need to know the distribution of $\mathcal S$, which is a linear combination of iid $\chi^2_1$ random variables. In \cite{MARSH_RANDGRAV}, a three moment fit (\cite{SOLSTEPH_CHI}) is used to approximate the distribution with a single $\chi^2_p$ variable---the quality of which we will discuss later.
The computed values were
\begin{align*}
p&\approx 0.24\mt{and}c\approx23\point
\intertext{In contrast, by simulating the full mass matrix $m^2_-$ numerically and then fitting the $N$-dependence to \prettyref{eq:statdep}, the values}
p&=1.28\pm0.10\mt{and}c=0.29\pm0.06
\end{align*}
were obtained. The difference is significant.

\subsection{Simulation}

\begin{figure}
\subfigure[$P_N$]{\label{fig:sim-a}\includegraphics[width=0.49\textwidth]{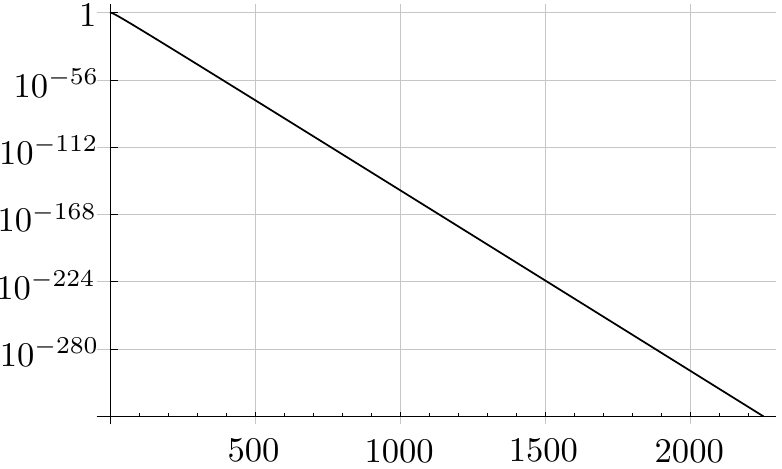}}\hfill
\subfigure[Comparison with \cite{MARSH_RANDGRAV}]{\label{fig:sim-b}\includegraphics[width=0.49\textwidth]{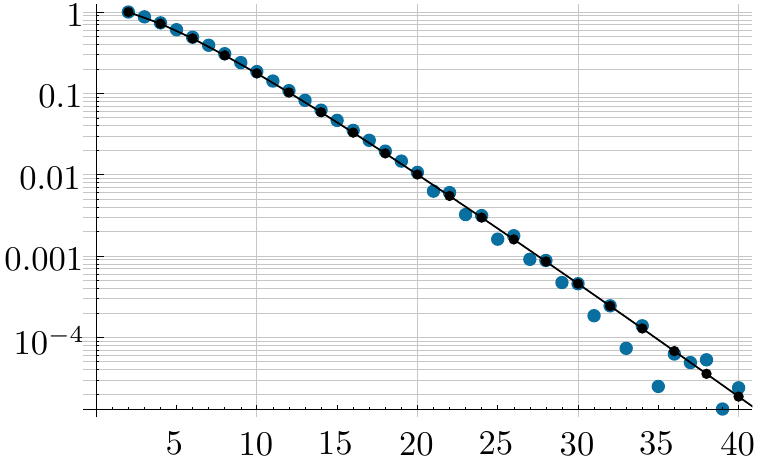}}
\subfigure[Computation time (in s)]{\includegraphics[width=0.49\textwidth]{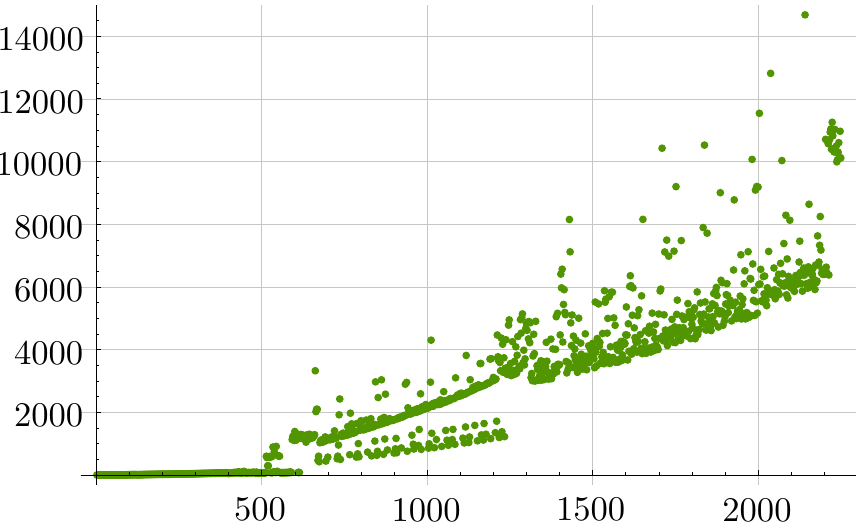}}\hfill
\subfigure[Relative errors]{\label{fig:sim-d}\includegraphics[width=0.49\textwidth]{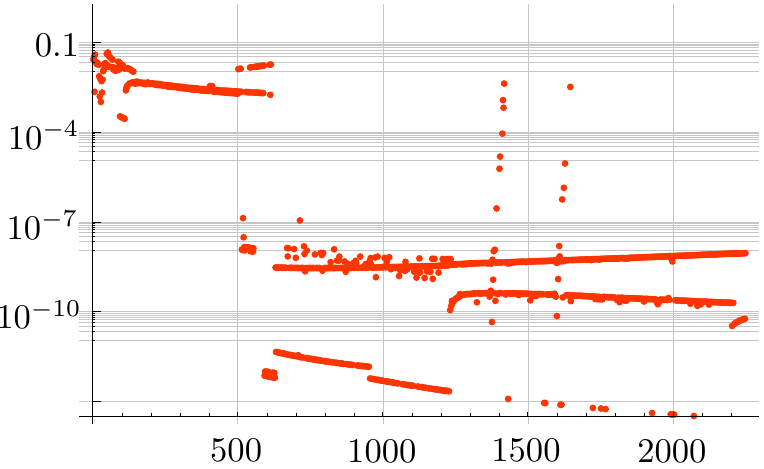}}
\caption{Figure (a) shows the $N$ dependence of $P_N$. Figure (b) shows that the result is coherent with the simulation
of the $\mathcal S$-terms of the mass matrix,
as done by \cite{MARSH_RANDGRAV}. Figure (c) shows the computation time needed to calculate the values and (d) the relative errors, as determined by the algorithm.}
\label{fig:sim}
\end{figure}

Let us return to the issue of finding the $N$ dependence of the fluctuation probability of the mass matrix in our $4d$ random supergravity.
For now accepting the approximation $\mathcal T_-\approx1$, we want to calculate
\begin{equation}
P_N:=\p\left(F^2\sum_{b'=2}^N \frac{|U_{11b'}|^2}{\avg{\lambda_{b'}^2}}\le1\right)=:\p\left(\sum_{i=2}^N a_i X_i\le1\right)=:\p(Y_N\le1)\comma
\end{equation}
where $X_i\sim\chi^2(1)$ and $a_i:=(N\avg{\lambda_{b'}^2})^{-1}$.

Using the method discussed in \prettyref{sec:method}, we calculate the pdf $f_{Y_N}$ for $N$ from $2$ to $2250$ and integrate it over $(0,1)$, which gives $P_N$. All the while the simulation keeps track of the relative error and ensures that it stays below $0.05$. The results can be seen in \prettyref{fig:sim}.

The first $400$ values were calculated on a single i7-2670QM, while the rest were computed on a small cluster with slower individual CPUs (i7-860) and varying load balance, which is where the big amount of noise for the computation time and the relative errors originates.

Let us now return to the question of the distribution of the full mass matrix $m^2_-$. Our first observation is that we can analytically convolve the distributions of $|K^{(3)}|^2$ and $K_{1\bar11\bar1}^{(4)}$. This yields the rather ugly-looking expression
\begin{align*}f_{\mathcal T}(x)=2^{-\frac n4-2} N\ \ee^{-\frac{1}{8} N^2 x^2} &\left(\frac{\sqrt{2} \,
   _1\map F_1\left(\frac{N}{4};\frac{1}{2};\frac{1}{8} (N x-2)^2\right)}{\Gamma
   \left(\frac{N+2}{4}\right)}\right.\\
   &+\left.\frac{(N x-2) \,
   _1\map F_1\left(\frac{N+2}{4};\frac{3}{2};\frac{1}{8} (N x-2)^2\right)}{\Gamma
   \left(\frac{N}{4}\right)}\right)\comma
\end{align*}
which however simplifies gravely for specific values of n. This pdf, alongside the analytical expressions found for the $\mathcal S$-term $f_{Y_N}$, allows us to numerically convolve the two terms and calculate the distribution---and thus the fluctuation probability---of the full mass matrix, for large numbers of $N$ and to a very high precision.

The numerical convolution is done in a straightforward manner using \textsc{Mathematica}. While estimating the error of such a numerical convolution is generally hard, using small enough sample sizes should give us in principle negligible contribution to our error---we mention it for the sake of completeness though. The results can be seen in \prettyref{fig:simSandT}.

\subsection{Data Analysis}

\begin{figure}
\subfigure[Fit to $-c N^p$]{\includegraphics[width=0.49\textwidth]{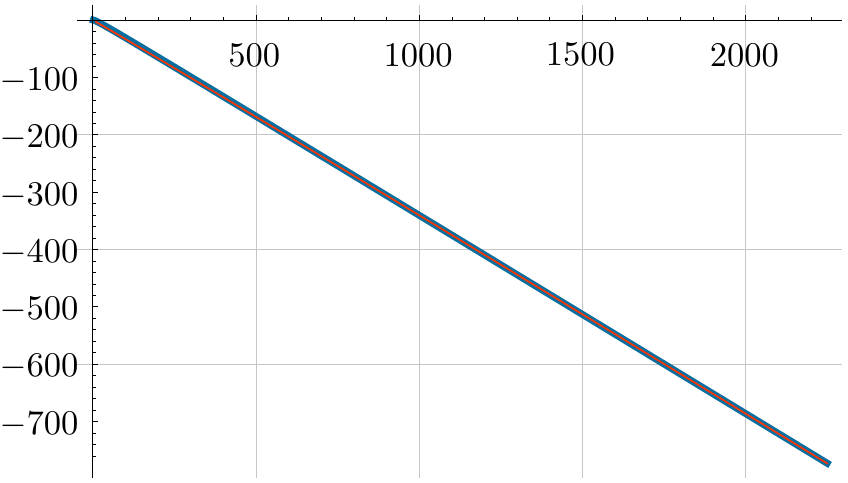}}\hfill
\subfigure[Fit to $\log(a+N)-c-dN$]{\includegraphics[width=0.49\textwidth]{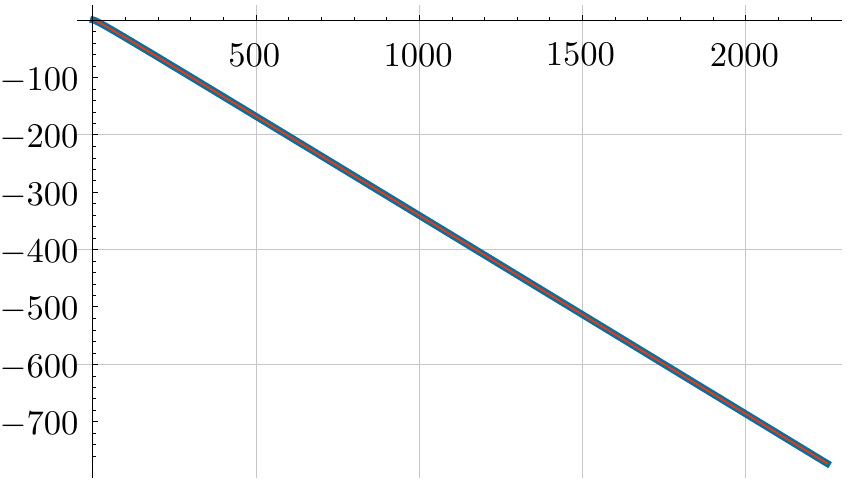}}
\subfigure[Residuals for (a)]{\includegraphics[width=0.49\textwidth]{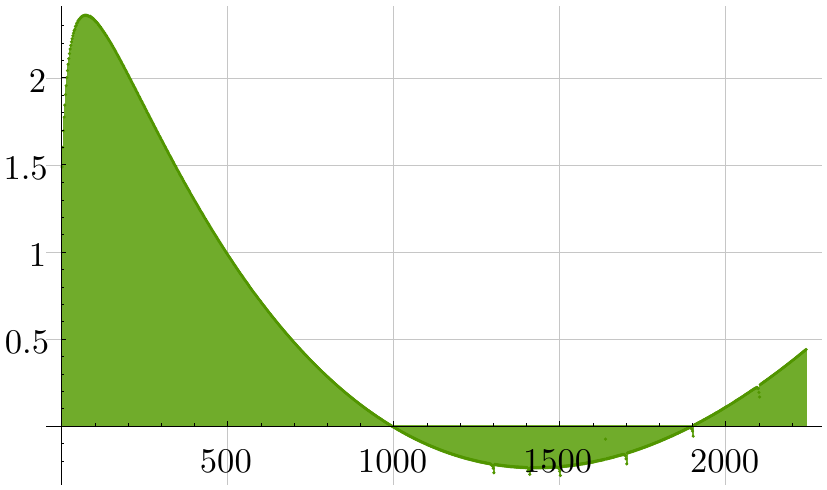}}\hfill
\subfigure[Residuals for (b)]{\label{fig:fit-d}\includegraphics[width=0.49\textwidth]{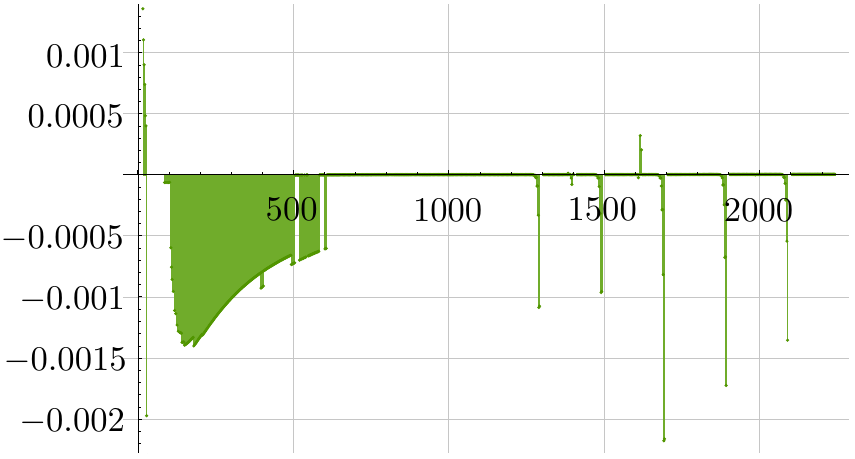}}
\caption{Figure (a) and (b) show plots of fits (orange) to our data (blue) for both proposed models. Figure (c) and (d) show the corresponding residuals.}
\label{fig:fit}
\end{figure}

\begin{figure}
\subfigure[]{\includegraphics[width=0.49\textwidth]{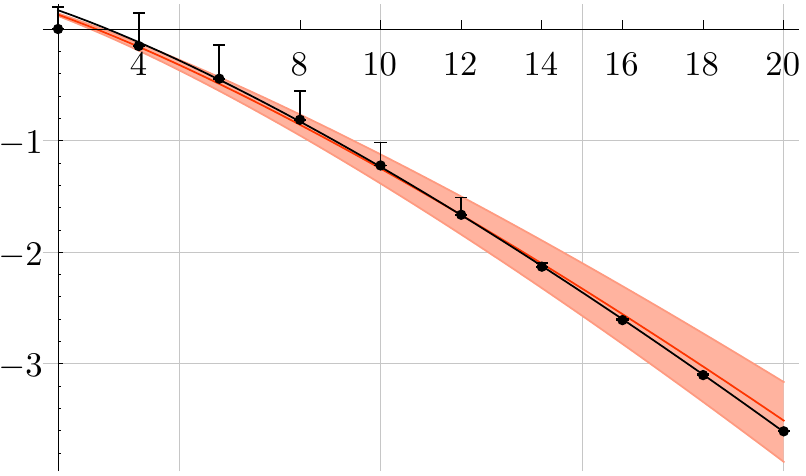}}\hfill
\subfigure[]{\includegraphics[width=0.49\textwidth]{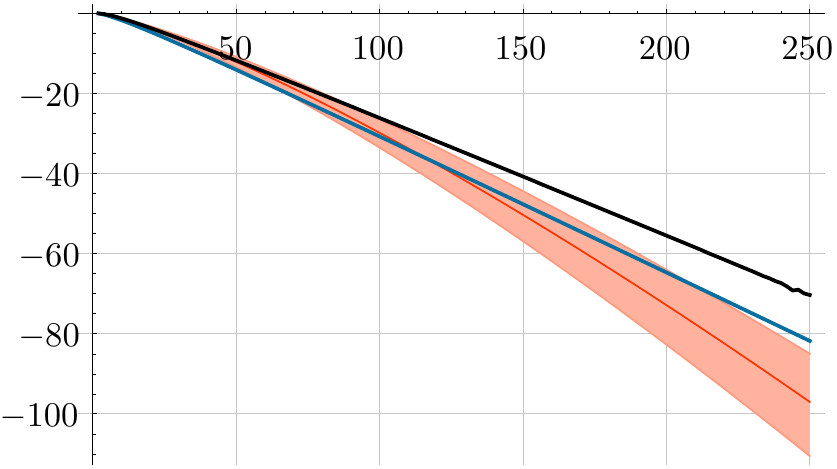}}
\caption{(a) Comparison with \cite[Fig. 7]{MARSH_RANDGRAV}. The shaded area indicates the parameter range for the best fit curve given in \cite{MARSH_RANDGRAV}. The black line is a similar best fit curve (to the values shown) with $p\approx1.31$, so well within the error range. (b) Comparison of $\sset P(\mathcal S\le1)$ (blue), the full $m^2_-$ (black) and the suggested best fit from \cite{MARSH_RANDGRAV}. The error bars for our method are negligibly small.}
\label{fig:simSandT}
\end{figure}

We now turn to the data analysis of our simulation of $\mathcal 
S$ resp. the full mass matrix $m^2_-$.
Our simulation of $\mathcal S$ agrees very well with the numerical simulations of
the $\mathcal S$-part of the mass matrix from \cite{MARSH_RANDGRAV}, as can be seen in the low $N$ regime \prettyref{fig:sim-b}. For larger $N$, \prettyref{fig:sim-a}, the overall shape then follows more and more that of a simple exponential, i.e. $\propto\ee^{-cN}$ for some $c$.

Indeed, by fitting the suggested functional dependence $\ee^{-cN^p}$, the residuals show a strong characteristic shape, see \prettyref{fig:fit}. While the overall shape matches pretty well, another function,
\begin{equation}
\ee^{\log(a+N)-c-dN}\equiv(a+N)\ee^{-c-dN}\comma
\end{equation}
matches the shape a lot better, as can be seen by looking at the corresponding residuals. While \prettyref{fig:fit-d} still shows a correlation for the residuals, they are now of the same order as the proposed error, \prettyref{fig:sim-d}. This means that this model is correct, within the given precision.

The fit parameters and standard deviations for both models can be found in \prettyref{tab:fit}.

\begin{table}
\subtable[Parameters for $-cN^p$]{
\begin{tabular}{cll}
\toprule[1pt]
 & Estimate & Standard Error \\ 
\midrule[.5pt]
$c$ & 0.31395 & 0.00011 \\ 
$p$ & 1.01171 & 0.00005 \\ 
\bottomrule[1pt]
\end{tabular}
}\hfill
\subtable[Parameters for $\log(a+N)-c-dN$]{
\begin{tabular}{cll}
\toprule[1pt]
 & Estimate & Standard Error \\ 
\midrule[.5pt]
a & 1.88 & 0.06 \\ 
c & 0.75652 & 0.00011 \\ 
d & 0.34657360 & $4\cdot10^{-8}$ \\ 
\bottomrule[1pt]
\end{tabular}
}\caption{Fit parameters for both proposed models for $\sset P(\mathcal S\le1)$.}
\label{tab:fit}
\end{table}

\begin{table}
\subtable[Parameters for $-cN^p$]{
\begin{tabular}{cll}
\toprule[1pt]
 & Estimate & Standard Error \\ 
\midrule[.5pt]
$c$ & 0.186 & 0.00012 \\ 
$p$ & 1.0759 & 0.00013 \\ 
\bottomrule[1pt]
\end{tabular}
}\hfill
\subtable[Parameters for $\log(a+N)-c-dN$]{
\begin{tabular}{cll}
\toprule[1pt]
 & Estimate & Standard Error \\ 
\midrule[.5pt]
a & 0.4 & 0.6 \\ 
c & 0.615 & 0.013 \\ 
d & 0.30096 & $6\cdot10^{-5}$ \\ 
\bottomrule[1pt]
\end{tabular}
}\caption{Fit parameters for both proposed models for the full mass matrix $m^2_-$.}
\label{tab:fit2}
\end{table}

There is an enormous difference in the amount of data available between the aforementioned methods and our newer one. In our paper---and this agrees with \cite{MARSH_RANDGRAV} and \cite{CHEN_RANDGRAV}---the first $\set O(20)$ data points show a sharp downward bent in a logarithmic plot, suggesting that the probability should go like $P\propto e^{-c_1 N^p}$ with $p\gtrsim1.25$ (and indeed a fit to those first $20$ points yields just such a behaviour).

However---and this again can already be seen as trend in \cite{MARSH_RANDGRAV} and \cite{CHEN_RANDGRAV}---the data points flatten more and more, becoming more and more linear. Unfortunately, the above-mentioned authors did not have sufficiently accurate data for this regime to explore this behaviour. Due to the sheer number of data points in our model ($\set O(2000)$), the behaviour at low $N$ does not receive much weight. Since we are interested in high $N$, though, this should not pose a problem.

For the full mass matrix, for which we added in the $N$-dependence of the $\mathcal T$-term, we once again basically confirm what has been done in \cite{MARSH_RANDGRAV}, at least in the lower $N$ regime that was accessible for them. In \prettyref{fig:simSandT}-a, we see that we can reproduce the previous value of $p\approx1.3$.
Analogously to the case of $\mathcal S$ alone, though, for higher values of $N$, the fluctuation probability for $m^2_-$ to become positive flattens significantly. Repeating the same numerical fit procedure as for the $\mathcal S$-term alone, we obtain the fit parameters in \prettyref{tab:fit2}.

This is the major result of the application of our method to the open problem of a full stability analysis of \knownth{deSitter} vacua in string theory. In brief, it suggests that the probability of finding a metastable \knownth{deSitter} vacuum follows the asymptotic form
\begin{empheq}[box=\emphbox]{equation*}
P_N\sim N\ee^{-0.30096 \pm 6\cdot10^{-5}N}\point
\end{empheq}

\section{Conclusions and Future Work}
The first main result is the analytic expression for the density of a sum of two $\Gamma$ random variables (\prettyref{lem:gammaconv}).
With this expression as a building block, we have derived an efficient algorithm (\prettyref{lem:convalg}) for the calculation of a linear combination of an even number of $\chi^2_r$ random variables.

Unfortunately, at this point, the authors were not able to calculate the numerical convolution of the full mass matrix $m_-^2$ to $N=\set O(2000)$ due to computational restrictions. Qualitatively, though, it is clear that the value for $p$ will most likely converge to something in the range $1\lesssim p\lesssim1.08$.

\begin{em}
With these methods, a reliable estimate for the probability of a metastable \knownth{deSitter} vaccum in a $\mathcal N=1$ $4d$ supergravity has been shown to be of the asymptotic form $N\ee^{-0.3 N}$, where $N$ denotes the number of scalar fields in the theory. As discussed, this last result gives a much weaker bound than the one suggested by \cite{MARSH_RANDGRAV} or \cite{CHEN_RANDGRAV}.
\end{em}

This result is significant. One important implication for the existence of flux vacua is the following. If we assume that there are $M\in\set O(10^{500})$ local extrema, of which $M_+\ll M$ are metastable \knownth{deSitter} vacua, we can conclude that
\[\log(a+N)-c-dN\sim\log N-dN\sim\log\left(\frac{M_+}M\right)\point\]
Consequently, this description of a $4d$ random supergravity is plausible---i.e. $M_+>1$, which means that there is at least one metastable vacuum---if $-500\log 10\lesssim\log N-dN$ or
$N\lesssim 3800$.

\section*{Acknowledgement}I am grateful to Liam McAllister for helpful discussions and for pointing out this project to me. I would also like to thank David Marsh and Timm Wrase for their simulation data and for reviewing my ideas.
Furthermore the author would like to thank the reviewers of this article for their comments that helped to considerably improve the manuscript.

\renewcommand\appendixname{Appendix\ }
\appendix
\section{More Exact Formulae for $\chi^2$ Random Variables}
There are a few more analytic expressions for the pdf $f$ of a sum of $\chi^2$ random variables.
\begin{remark}
Let $X,Y,Z\sim\chi^2_1$ iid random variables and $a,b\in\sset R_{>0}$. Then
\begin{align*}
f_{aX+aY+bZ}(x)&=\theta(x)\sqrt{\frac1{\pi a^2b}}\sqrt{\frac{b-a}{ab}}\ \ee^{-\frac x{2b}}\map F\!\left(\!\!\sqrt{\frac{b-a}{2ab}x}\right)\\
&\equiv\theta(x)\sqrt{\frac1{4a^2b}}\sqrt{\frac{b-a}{ab}}\ \ee^{-\frac x{2a}}\map{erfi}\!\left(\!\!\sqrt{\frac{b-a}{2ab}x}\right)
\comma
\end{align*}
where $\map F$ is the \knownth{Dawson} integral and $\map{erfi}$ the imaginary error function.
\end{remark}
\begin{remark}
Let $X,Y,Z,W\sim\chi^2_1$ iid random variables and $a,b\in\sset R_{>0}$. Then
\begin{align*}
f_{aX+aY+aZ+bW}(x)=\theta(x)\frac14\frac1{\sqrt{a^3b}}\ee^{-\frac{a+b}{4ab}x}\left(\map I_0\left(\frac{a-b}{4ab}x\right)+\map I_1\left(\frac{a-b}{4ab}x\right)\right)\point
\end{align*}
\end{remark}

\section{The Fluctuation Probability of the Full Mass Matrix $m^2_-$}\label{sec:afprob}
For the full mass matrix $m_-^2$, we numerically convolved the $\mathcal S$ and $\mathcal T$-terms and then numerically integrated the probability of getting a positive contribution.

Shown in \prettyref{fig:fprob} are the different probability density functions for the distributions comprising $m^2_-$. It is evident to see that the approximation $\mathcal T\approx1$ is quite crude---the fluctuation probability of the $\mathcal T$-term plays a certain role, even for larger values of $N$.

\begin{figure}
\raisebox{1.4cm}{\makebox[0.5cm][l]{2}}\hfill\subfigure{\includegraphics[width=0.3\textwidth]{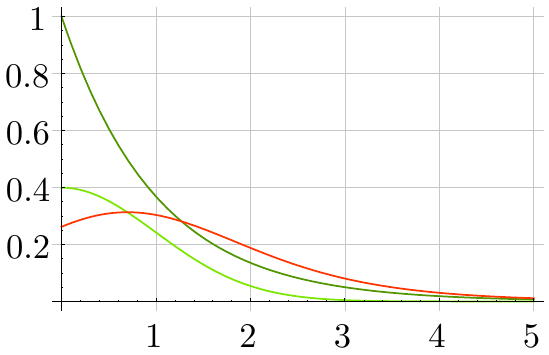}}\hfill
\subfigure{\includegraphics[width=0.3\textwidth]{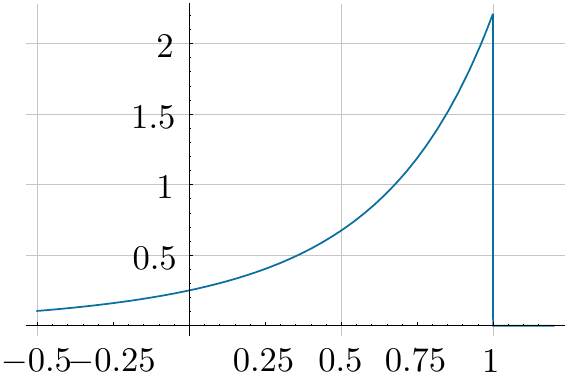}}\hfill
\subfigure{\includegraphics[width=0.3\textwidth]{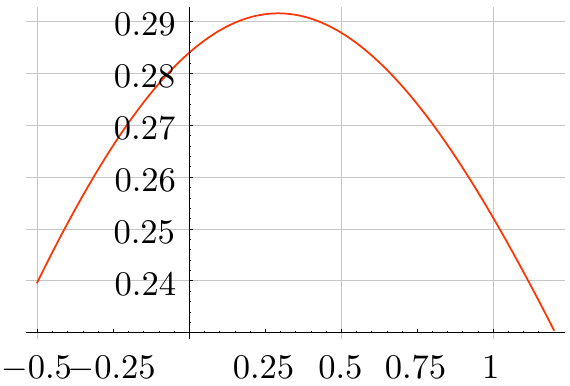}}\\
\raisebox{1.4cm}{\makebox[0.5cm][l]{4}}\hfill\subfigure{\includegraphics[width=0.3\textwidth]{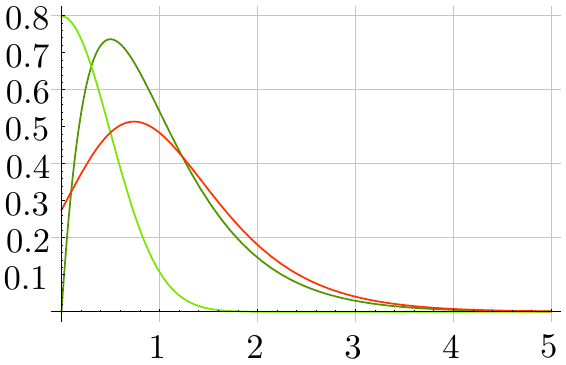}}\hfill
\subfigure{\includegraphics[width=0.3\textwidth]{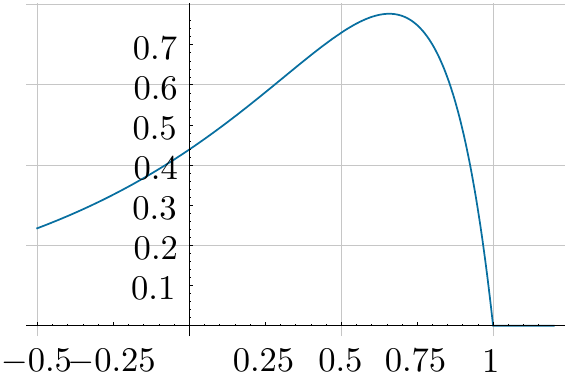}}\hfill
\subfigure{\includegraphics[width=0.3\textwidth]{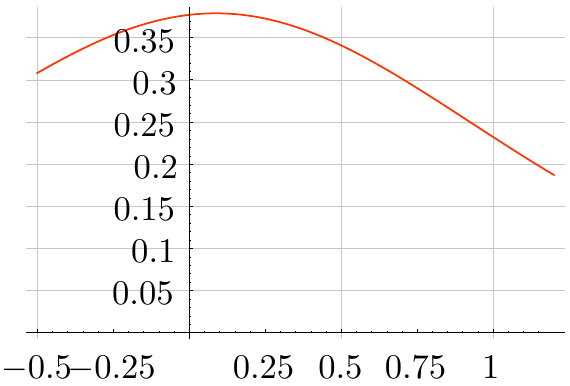}}\\
\raisebox{1.4cm}{\makebox[0.5cm][l]{6}}\hfill\subfigure{\includegraphics[width=0.3\textwidth]{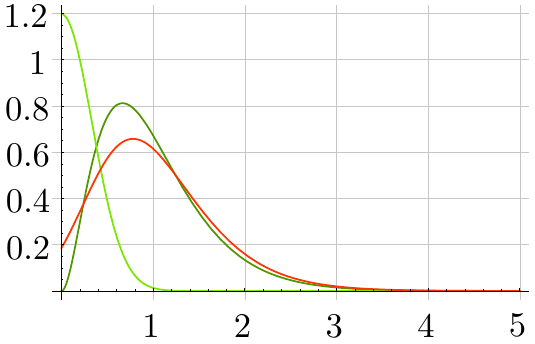}}\hfill
\subfigure{\includegraphics[width=0.3\textwidth]{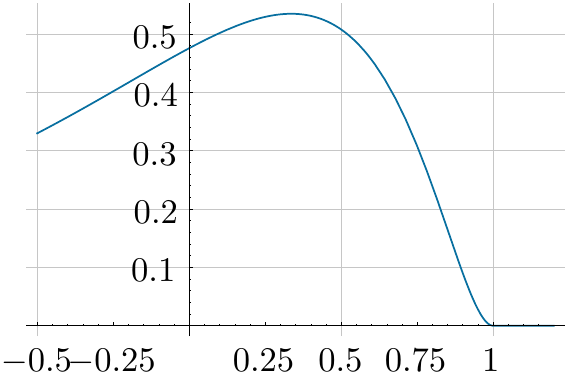}}\hfill
\subfigure{\includegraphics[width=0.3\textwidth]{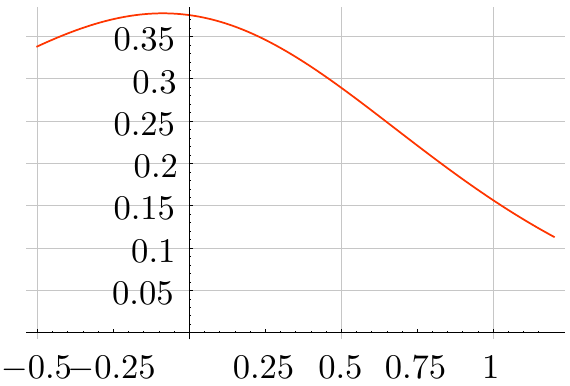}}\\
\raisebox{1.4cm}{\makebox[0.5cm][l]{8}}\hfill\subfigure{\includegraphics[width=0.3\textwidth]{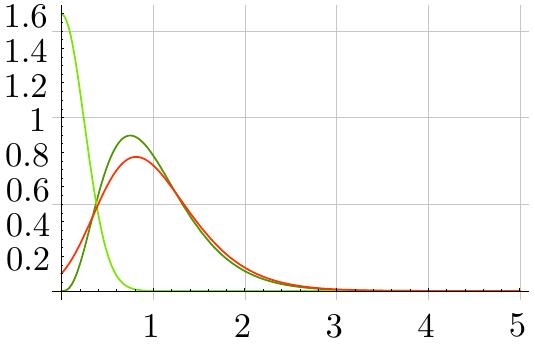}}\hfill
\subfigure{\includegraphics[width=0.3\textwidth]{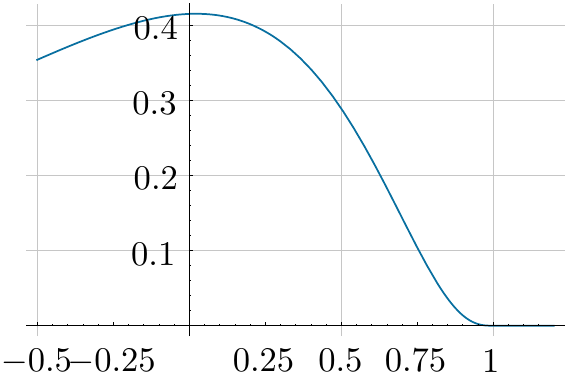}}\hfill
\subfigure{\includegraphics[width=0.3\textwidth]{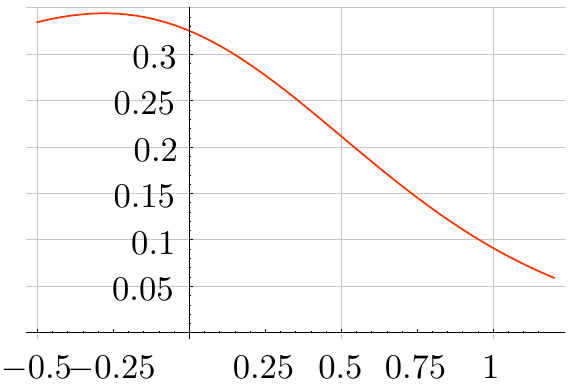}}\\
\[\vdots\]
\raisebox{1.4cm}{\makebox[0.5cm][l]{20}}\hfill\subfigure{\includegraphics[width=0.3\textwidth]{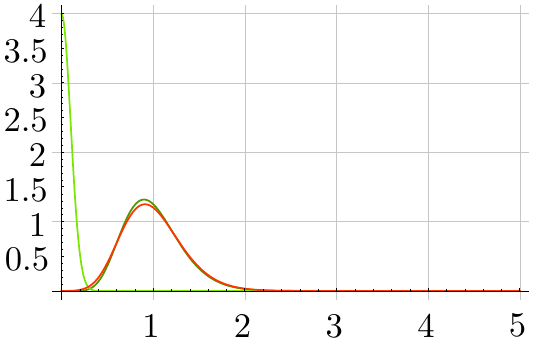}}\hfill
\subfigure{\includegraphics[width=0.3\textwidth]{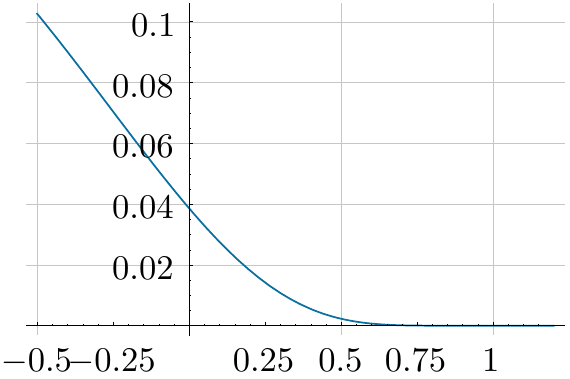}}\hfill
\subfigure{\includegraphics[width=0.3\textwidth]{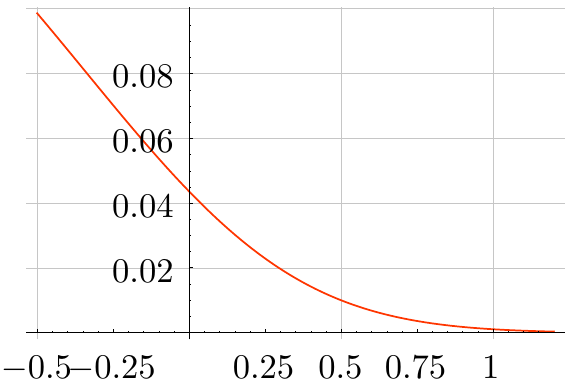}}\\
\[\vdots\]
\raisebox{1.4cm}{\makebox[0.5cm][l]{100}}\hfill\subfigure{\includegraphics[width=0.3\textwidth]{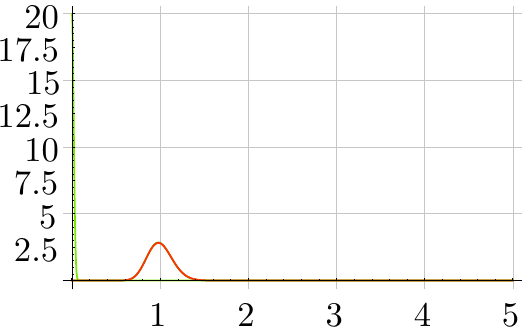}}\hfill
\subfigure{\includegraphics[width=0.3\textwidth]{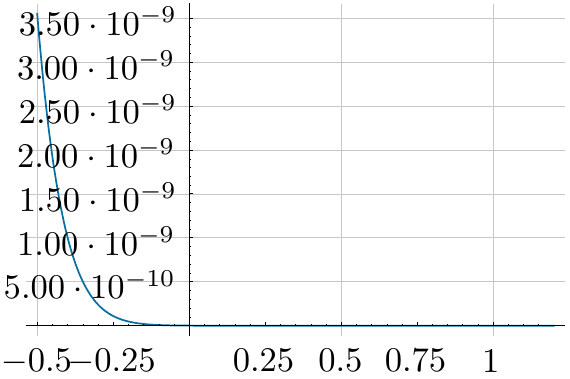}}\hfill
\subfigure{\includegraphics[width=0.3\textwidth]{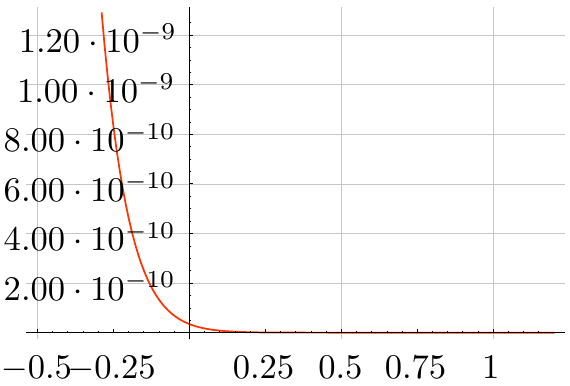}}
\caption{Fluctuation probability for select $N$. Left column: $K_{1\bar11\bar1}$ (light green), $K^{(3)}$ (dark green) and their convolution (red). Middle column: right tail of distribution for $\mathcal S+1$. Right column: right tail of distribution of full mass matrix $m^2_-$.}
\label{fig:fprob}
\end{figure}

\section{Literature}
\bibliographystyle{babunsrt}
\bibliography{bibliography}

\end{document}